\def\11{{1\kern-3.5pt 1}}
\def\mumu{{\mu\kern-4.2pt\mu}}
\def\boxtimes{\setbox0\hbox{$\Box$}\copy0\kern-\wd0\hbox{$\times$}}
\def\e{{\mathbf {e}}}
\def\f{{\operatorname {f}}}
\def\id{\operatorname {id}}
\def\rad{\operatorname {rad}}
\def\k{\mathbbm{k}}
\def\gldim{\operatorname{gldim}}
\def\mod{\operatorname{mod}}
\newtheorem{lemma}{Lemma}[section]
\newtheorem{proposition}[lemma]{Proposition}
\newtheorem{theorem}[lemma]{Theorem}
\newtheorem{corollary}[lemma]{Corollary}
\theoremstyle{definition}
\newtheorem{example}[lemma]{Example}
\newtheorem{definition}[lemma]{\sl Definition}
\newtheorem*{question*}{Question}
\theoremstyle{remark}
\newtheorem{remark}[lemma]{Remark}
\begin{document}

\pagenumbering{arabic}

\title
{\textnormal{Quasi-diagrams and gentle algebras}}

\author[Hu, Wang \& Ye]{Haigang Hu, Xiao-Chuang Wang and Yu Ye}

\address{\begin{minipage}{12.2cm}{Hu: School of Mathematical Sciences, University of Science and Technology of China, Hefei, Anhui 230026, CHINA}\end{minipage}}

\email{huhaigang@ustc.edu.cn; huhaigang\_phy@163.com} 

\address{\begin{minipage}{12.2cm}{Wang: School of Mathematical Sciences, University of Science and Technology of China, Hefei, Anhui 230026, CHINA}\end{minipage}}

\email{wxchuang@mail.ustc.edu.cn}

\address{\begin{minipage}{12.2cm}{Ye: School of Mathematical Sciences, Wu Wen-Tsun Key Laboratory of Mathematics, University of Science and Technology of China, Hefei, Anhui 230026, CHINA 
\\
and 
\\
Hefei National Laboratory, University of Science and Technology of China, Hefei 230088, CHINA}\end{minipage}}

\email{yeyu@ustc.edu.cn}

\keywords{gentle algebra, regular quasi-diagram, dihedral group}

\thanks {\it {2020 Mathematics Subject Classification}: {\rm 16P10, 16E10, 20B30}}


\begin{abstract} 
Any gentle algebra $A$ with one maximal path corresponds to  a unique quasi-diagram $\alpha$. 
We introduce the regularity for $\alpha$, and show that $A$ has finite global dimension if and only if $\alpha$ is regular.
We  characterize regular quasi-diagrams which remain regular under the dihedral group action. We prove that the set of maximal chord diagrams is the ``biggest'' one among the sets closed under taking Koszul dual and rotations.
\end{abstract}

\maketitle
\thispagestyle{empty}


\section{Introduction} 

Throughout $\k$ is an algebraically closed field of characteristic 0, and all vector spaces and algebras are over $\mathbbm k$. 

Gentle algebras were introduced by Assem, Happel and Skwr\'onski in the 1980s \cite{AH, AS} as a generalization of iterated tilted algebras of type $\mathbb{A} _n$ and $\tilde{\mathbb A}_n$.
In recent years, gentle algebras have attracted much attention in the representation theory of associative algebras due to their nice homological properties.
Quite remarkably, gentle algebras connect closely to many areas of mathematics, e.g., Lie algebras \cite{HK}, cluster theory \cite{BZ}, homological mirror symmetry \cite{HKK,LP}, etc.
 
It was shown that every gentle algebra $A$ can be obtained by gluing (vertices of) $\mathbb{A}_n$ quivers \cite[Section 2]{BD}, say quivers of the form
$$
\xymatrix{
\overset{1}{\circ} \ar[r] & \overset{2}{\circ} \ar[r] & \cdots \ar[r] & \overset{n}{\circ}.
}
$$
In other words, there is a canonical  radical embedding from $A$ into a product of path algebras of $\mathbb{A}_n$ quivers as explained below. For more details on radical embeddings, we refer to \cite[Section 3]{EHIS} (see also \cite{KL}).

Let $A = \k Q_A/I_A$ be a gentle algebra (Definition \ref{defn-gent}). Then every arrow of $Q_A$ belongs to a unique maximal nonzero path in $A$. Let $p_1, p_2,\dots, p_m$ be all maximal paths and $l_i$ the length of of $p_i$ for $1 \leq i \leq m$. We associate each $p_i$ a quiver $Q^i$ of type $\mathbb{A} _{l_i+1}$. Let $Q$ be the quiver with connected components $Q^1, Q^2, \cdots, Q^m$, and $R=\k Q$ the path algebra. Clearly we have an algebra isomorphism $ 
R\cong \k Q^1 \times \k Q^2 \times \cdots \times \k Q^m
$.  

Let
$
\mathcal{N}_A = \{ ( i, j ) \in \mathbb{N} \times \mathbb{N} \mid 1 \leq i \leq m, \ 1 \leq j \leq l_i + 1 \}
$ be an index set of vertices of $Q$, where $(i,j)$ refers to the $j$-th vertex of the quiver $Q^i$. Let ${e}^{i}_j$ be the idempotent of $kQ^i$ corresponding to the vertex $(i,j)$. 
We define an equivalence relation 
$
\sim 
$ 
on $\mathcal{N}_A$ as follows: 
$
(i, j) \sim  (i', j') 
$
if and only if the $j$-th vertex of the path $p_i$ coincides with the $j'$-th vertex of $p_{i'}$ in $Q_A$. By the definition of gentle algebras one can show that any equivalence class contains at most two elements.

For any $(i,j) \in \mathcal{N}_A$, we set
$ 
e_{i,j} = \sum\limits_{(u,v)\in \mathcal N_A\atop  (u,v)\sim (i,j)} e_{v}^{u} 
$.
Then $e_{i,j}^2=e_{i,j}$, $e_{i,j} = e_{u,v}$ if $(i,j)\sim (u,v)$, and $e_{i,j}e_{u,v}=0$ if $(i,j)\nsim (u,v)$. Let
\[S =  \sum_{i=1}^m\sum_{j=1}^{l_i+1} \k e_{i,j} \bigoplus \rad R =(\bigoplus_{(i,j)\in \mathcal N_A/\sim} \k e_{i,j})\bigoplus \rad R\] 
be the linear span of $e_{i,j}$'s and 
the Jacobson radical of $R$. Clearly $S$ is an subalgebra of $R$, and the inclusion $S\subseteq R$ is a {\it radical embedding} \cite[Lemma 3.1]{EHIS}, where by radical embedding it is meant that $\rad S = \rad R$.

One can show that $A$ is isomorphic to $S$ as algebras, and hence there exists a radical embedding $A\hookrightarrow R$. We say that the pair $(\mathcal{N}_A, \sim)$ is the {\it gluing datum} associated to $A$, and the gentle algebra $A$ is obtained by gluing idempotents with the gluing datum  $(\mathcal{N}_A, \sim)$. 

For a radical embedding $f\colon A \to B$, 
it is natural to compare the homological properties of $A$ and $B$ \cite{EHIS,XX}. In general, $A$ and $B$ may have totally different homological behavior and it is hard to tell what property is preserved under radical embedding. For instance, in our case,  the gentle algebra  $A$ is radically embedded in some $\k Q$. Clearly $\k Q$ is a hereditary algebra, while A may have infinite global dimension. We are interested in the naive question when $A$ has finite global dimension. 
Specifically, in this paper, we focus on the case when the gentle algebra $A$ has only one maximal path. The first question we aim to solve is:

\medskip
\noindent{\bf Question 1.}
Let $A$ be a gentle algebra obtained by gluing one $\mathbb{A} _n$ quiver $Q$ with gluing datum $(\mathcal{N}_A, \sim)$. 
Is there a simple way to determine whether $A$ has finite global dimension by studying the gluing datum $(\mathcal{N}_A, \sim)$?  
\medskip

The symmetric groups turn out to be a handy tool in studying this problem. 
Let $A$ and $(\mathcal{N}_A, \sim)$ as in the question above. Note that in this case, $\mathcal N_A$ is simply identified with $\{1,2, \cdots, n\}$, and $\sim$ is viewed as a partition of $\{1,2, \cdots, n\}$.
Let $\mathfrak{S}_n$  be the symmetric group of degree $n$. We associate a permutation $\alpha \in \frak{S}_n$ to $(\mathcal{N}_A, \sim)$ as follows,
$$
\alpha(i) = \begin{cases} j,  &\text{\quad if } \exists i\ne j\in\mathcal{N}_A, j \sim i;\\
i, &\text{\quad otherwise}. 
\end{cases}
$$
Note that $\alpha$ is an involution and hence a  {\it quasi-diagram} (a generalization of chord diagrams) in the sense of \cite{CM}.
We call $\alpha$ the quasi-diagram associated to $A$.   
This gives a one-to-one correspondence
$$
\{\text{gentle algebras with one maximal path}\}/\cong  \overset{1:1} {\longleftrightarrow}  \{\text{quasi-diagrams}\}.
$$

Consider the $n$-cycle $\zeta = (123\cdots n) \in \frak{S}_n$ and the natural actions of $\zeta\alpha$ and $\alpha\zeta$ on the set $\{1,2,\cdots, n\}$. The following result gives an answer to the above question.

\begin{theorem}[Theorem \ref{thm-main}] \label{thm-intro-1}
Let $A$ be a gentle algebra with one maximal path, and $\alpha \in \frak{S}_n$ the associated quasi-diagram. Then the following are equivalent. 
\begin{itemize}
\item[(1)] The global dimension $\gldim A  < \infty$.
\item[(2)] Any $\zeta\alpha$-orbit contains either $1$ or at least one isolated point of $\alpha$.
\item[(3)] Any $\alpha\zeta$-orbit contains either $n$ or at least one isolated point of $\alpha$.
\end{itemize}
Here by an {\it isolated point} of $\alpha$ it is meant a point fixed by $\alpha$.
\end{theorem}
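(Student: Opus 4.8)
The plan is to compute minimal projective resolutions of the simple $A$-modules explicitly, exploiting that $A$ has a single maximal path. Using the identification $A\cong S$ from the introduction, write $a_i$ for the arrow joining positions $i$ and $i+1$ of the unfolded $\mathbb{A}_n$ quiver, and $[k]$ for the $\sim$-class of a position $k$. Since $p_1=a_{n-1}\cdots a_1$ is the unique maximal nonzero path, the gentle relations force every nonzero length-two path to be a ``straight'' composition $a_{i+1}a_i$; hence the nonzero paths of $A$ are exactly the connected subpaths of $p_1$. For $1\le k\le n$ let $M_k$ be the uniserial (string) module supported on positions $k,k+1,\dots,n$, with top $S_{[k]}$ and length $n-k+1$, and set $M_{n+1}:=0$. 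Reading $Ae_{[v]}$ off the paths starting at the one or two positions of $[v]$, one finds $M_k=P_{[k]}$ when $k$ is isolated, while for a glued pair $\{k,\alpha(k)\}$ the projective cover satisfies $\rad P_{[k]}=M_{k+1}\oplus M_{\alpha(k)+1}$ (a genuine direct sum, since a path from position $k$ is never a path from position $\alpha(k)$).

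The key step is the resulting syzygy recursion. Comparing the uniserial $M_k$ (radical $M_{k+1}$) with its cover $P_{[k]}$, the cover kills precisely the second branch, so
\[
\Omega M_k=\begin{cases}0,& \alpha(k)=k,\\ M_{\alpha(k)+1},& \alpha(k)\ne k,\end{cases}
\]
with $M_{n+1}=0$; in particular $\Omega M_k=0$ also when $\alpha(k)=n$. As each $\Omega M_k$ is again a single indecomposable $M_\bullet$ or zero, the minimal resolution of $M_k$ is governed by iterating $k\mapsto\alpha(k)+1$, which is exactly $\zeta\alpha$. Thus $\operatorname{pd}M_k<\infty$ iff the forward $\zeta\alpha$-orbit of $k$ meets a terminating index, namely an isolated point of $\alpha$ or an index $x$ with $\alpha(x)=n$. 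Since $\zeta\alpha$ is a permutation, its forward orbit is the full cycle through $k$; and $\alpha(x)=n$ is equivalent to $\zeta\alpha(x)=1$, so ``the orbit meets such an $x$'' is the same as ``the orbit contains $1$.'' Hence $\operatorname{pd}M_k<\infty$ iff the $\zeta\alpha$-orbit of $k$ contains $1$ or an isolated point.

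To finish (1)$\Leftrightarrow$(2) I would argue both directions from this, using $\gldim A=\sup\{\operatorname{pd}S:S\text{ simple}\}$. If every $\zeta\alpha$-orbit contains $1$ or an isolated point, then every $M_k$ has finite projective dimension; since each $\Omega S_{[v]}=\rad P_{[v]}$ is a direct sum of modules $M_\bullet$, every simple has finite projective dimension, so $\gldim A<\infty$. Conversely, if some orbit avoids both, then for any $k$ in it the iteration never terminates, so $\operatorname{pd}M_k=\infty$ and already $\gldim A=\infty$. For (1)$\Leftrightarrow$(3) I would not repeat the computation but pass to $A^{\op}$, which is again gentle with one maximal path and has $\gldim A=\gldim A^{\op}$; relabelling positions by the reversal $w(i)=n+1-i$ replaces $\alpha$ by $w\alpha w$, and, using $w\zeta w=\zeta^{-1}$, turns the orbit condition for $\zeta\alpha^{\op}$ into the condition that every $\zeta^{-1}\alpha$-orbit contain $n$ or an isolated point of $\alpha$. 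Since $\alpha$ is an involution, $(\alpha\zeta)^{-1}=\zeta^{-1}\alpha$, and a permutation and its inverse have the same cycles, so this is exactly (3). I expect the main obstacle to be the boundary bookkeeping: pinning down the recursion with the correct conventions at the ends of the path (the vanishing $M_{n+1}=0$ and the special role of $\alpha(k)=n$) and checking that no extra indecomposable summands appear in higher syzygies, so that a single index really does track each branch of the resolution and the homological termination matches the orbit condition exactly.
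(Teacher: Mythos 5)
Your argument is correct, but it follows a genuinely different route from the paper's. The paper computes no resolutions: it writes down the quiver and relations of $A$ explicitly (a length-two path $a_ia_j$ lies in $I_A$ exactly when $j=\alpha\zeta(i)$ is not an isolated point), invokes the standard fact that the global dimension of a quadratic monomial algebra is the maximal length of a path with full relations (Lemma \ref{lem-gldim}), and observes that such paths fail to terminate precisely when some $\alpha\zeta$-orbit avoids $n$ and all isolated points, i.e.\ when there is an oriented cycle with full relations; this gives $(1)\Leftrightarrow(3)$ first, and $(2)\Leftrightarrow(3)$ is then pure permutation combinatorics via $\alpha\zeta=\alpha(\zeta\alpha)\alpha^{-1}$ together with Lemma \ref{lem-1}. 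You instead build the minimal projective resolutions of the simples from scratch; your syzygy recursion $\Omega M_k=M_{\zeta\alpha(k)}$ (with the stated boundary conventions) is the dual bookkeeping, tracking $\zeta\alpha$-orbits rather than $\alpha\zeta$-orbits, so you reach $(1)\Leftrightarrow(2)$ first and deduce $(3)$ by passing to $A^{\op}$ via the reflection $i\mapsto n+1-i$ --- essentially the paper's Proposition \ref{prop-ref} in disguise. Your computations check out: since all nonzero paths are subpaths of the single maximal path, the decomposition $\rad P_{[k]}=M_{k+1}\oplus M_{\alpha(k)+1}$ and the identification $\Omega M_k=M_{\alpha(k)+1}$ are both right, and the uniseriality of each syzygy disposes of your own worry about extra indecomposable summands appearing. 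What your approach buys is self-containedness (no appeal to the general monomial-algebra lemma) plus the finer data of the actual projective dimensions, which the paper only extracts later in Proposition \ref{prop-gldim}; what the paper's approach buys is brevity.
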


We say a quasi-diagram $\alpha$ is {\it regular} if it satisfies the equivalent conditions (2), (3) in the above theorem. The theorem tells us how to determine whether $A$ has finite global dimension by checking the associated quasi-diagram, and it seems to be  relatively easy, see  Example \ref{exm-1}. 
Moreover, we also provide a  method to calculate the global dimension of $A$ in case it is finite, see  Proposition \ref{prop-gldim}.

Let $P_n$ be an $n$-gon with sides labeled by $1,2, \dots, n$ consecutively around its boundary. 
Then a quasi-diagram $\alpha \in \frak{S}_{n}$ assigns each pair of sides $i, \alpha(i)$ of  $P_n$  a chord as shown in Example \ref{exm-draw}. 
The dihedral group $D_n$ of order $2n$ (viewed as a subgroup of $\frak{S}_{n}$) acts on quasi-diagrams by conjugation:
$$
g \cdot \alpha = g \alpha g^{-1}, \ g \in D_n, \ \alpha \in \frak{S}_n. 
$$
Let $A, A'$ be gentle algebras associated to quasi-diagrams $\alpha, g \cdot \alpha \in \frak{S}_{n}$ for some $g \in D_n$. Then in general, $A$ and $A'$ may be quite ``different''. For example, it is possible that $A$ have finite global dimension while $A'$ may not, or in other words, the regularity  of quasi-diagrams may not be preserved under conjugation. 
We are interested in the following question:

\medskip
\noindent{\bf Question 2. } \label{que-2}
Let $\alpha \in \frak{S}_n$ be a regular quasi-diagram. When is $g \cdot \alpha$ regular for all $g \in D_n$?
\medskip

A quasi-diagram $\alpha$ is said to be {\it rotatably regular} if $\zeta^l \cdot \alpha = \zeta^l \alpha \zeta^{-l} $ is regular for any integer $l$. The following result provides several equivalence conditions for $\alpha$ being rotatably regular, answering Question 2 to some extend.

\begin{theorem} {\rm (Proposition \ref{prop-ref}, Theorem \ref{thm-rot})} \label{thm-intro-2}
Let $\alpha \in \frak{S}_n$ be a quasi-diagram. Then the following statements are equivalent.
\begin{enumerate}
	\item $g \cdot \alpha$ is regular for all $g \in D_n$.
	\item $\alpha$ is rotatably regular.
	\item Either $\alpha $ is maximal, or each orbit of $\zeta \alpha$ contains at least one isolated point of $\alpha$.
\end{enumerate}
\end{theorem}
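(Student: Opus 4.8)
The plan is to prove the cycle $(1)\Rightarrow(2)\Rightarrow(3)\Rightarrow(2)\Rightarrow(1)$. The implication $(1)\Rightarrow(2)$ is immediate, since every rotation $\zeta^l$ is an element of $D_n$, so $(1)$ already asserts the regularity of each $\zeta^l\cdot\alpha$. The substance lies in two independent parts: the purely combinatorial equivalence $(2)\Leftrightarrow(3)$, and the reflection argument $(2)\Rightarrow(1)$.

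For $(2)\Leftrightarrow(3)$, the first step is to record how the data defining regularity transform under conjugation by $\zeta^l$. Writing $\beta_l=\zeta^l\cdot\alpha$, one computes $\zeta\beta_l=\zeta^{l}(\zeta\alpha)\zeta^{-l}$, so the orbits of $\zeta\beta_l$ are exactly the images $\zeta^l(O)$ of the orbits $O$ of $\zeta\alpha$; likewise the isolated points of $\beta_l$ are the $\zeta^l$-images of the isolated points of $\alpha$. Feeding this into condition $(2)$ of Theorem \ref{thm-intro-1}, the regularity of $\beta_l$ becomes the statement that every orbit $O$ of $\zeta\alpha$ contains $\zeta^{-l}(1)$ or an isolated point of $\alpha$. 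I would then quantify over $l$: since $\{\zeta^{-l}(1):l\in\mathbb{Z}\}=\{1,\dots,n\}$, demanding this for all $l$ forces each orbit $O$ either to contain an isolated point of $\alpha$, or to equal all of $\{1,\dots,n\}$. If every orbit contains an isolated point we land in the second alternative of $(3)$; otherwise some orbit is the whole set, which means $\zeta\alpha$ is an $n$-cycle and $\alpha$ has no isolated point, i.e. $\alpha$ is maximal. The reverse implication $(3)\Rightarrow(2)$ runs this dictionary backwards, using that a transitive $\zeta\alpha$ remains transitive (hence its unique orbit meets every $\zeta^{-l}(1)$) under conjugation.

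For $(2)\Rightarrow(1)$, the key observation is that the particular reflection $\sigma\colon i\mapsto n+1-i$ interchanges the two characterizations of regularity in Theorem \ref{thm-intro-1}. Using $\sigma\zeta\sigma=\zeta^{-1}$ and $\alpha^{-1}=\alpha$, one computes $\zeta(\sigma\cdot\alpha)=\sigma(\zeta^{-1}\alpha)\sigma^{-1}=\sigma(\alpha\zeta)^{-1}\sigma^{-1}$, so the orbits of $\zeta(\sigma\cdot\alpha)$ are the $\sigma$-images of the orbits of $\alpha\zeta$, while the isolated points transform by $\sigma$ and $\sigma(1)=n$. Hence condition $(2)$ applied to $\sigma\cdot\alpha$ reads off as condition $(3)$ applied to $\alpha$; since these two conditions are equivalent, $\sigma\cdot\alpha$ is regular if and only if $\alpha$ is, for \emph{every} quasi-diagram $\alpha$. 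Thus the single reflection $\sigma$ always preserves regularity, although rotations need not. Applying this to each $\zeta^{-l}\cdot\alpha$, which is regular by $(2)$, shows $\sigma\cdot(\zeta^{-l}\cdot\alpha)$ is regular; since $\zeta^l\sigma=\sigma\zeta^{-l}$ we have $(\zeta^l\sigma)\cdot\alpha=\sigma\cdot(\zeta^{-l}\cdot\alpha)$, so every reflection in $D_n$ also sends $\alpha$ to a regular quasi-diagram. Together with the rotations, this exhausts $D_n=\langle\zeta,\sigma\rangle$ and yields $(1)$.

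The step I expect to be most delicate is the reflection identity underlying $(2)\Rightarrow(1)$: one must isolate the correct reflection and track precisely how the orbits, the two distinguished points $1$ and $n$, and the isolated points are permuted, so that condition $(2)$ for $\sigma\cdot\alpha$ matches condition $(3)$ for $\alpha$ exactly, with no uncontrolled index shift. By contrast, the equivalence $(2)\Leftrightarrow(3)$ is conceptually routine once the conjugation dictionary is in place; its only subtlety is the correct reading of the maximal alternative, namely recognizing that a rotatably regular quasi-diagram possessing an orbit free of isolated points must be fixed-point-free with $\zeta\alpha$ an $n$-cycle, which is exactly maximality.
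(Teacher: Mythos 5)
Your proposal is correct and follows essentially the same route as the paper: the equivalence of (2) and (3) via the conjugation dictionary for faces and isolated points under rotation (Lemma \ref{lem-rot} and Theorem \ref{thm-rot}), and the passage from rotations to all of $D_n$ via the single reflection $i\mapsto n+1-i$, which is exactly the paper's $\gamma$ in Proposition \ref{prop-ref}, using the same identity $\gamma\zeta\gamma=\zeta^{-1}$ to swap the $\mathscr{A}_\alpha$- and $\mathscr{B}_\alpha$-characterizations of regularity. The only cosmetic difference is that you assemble the pieces into a single cycle of implications rather than two separate statements.
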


We introduce the notions of expansion and contraction for quasi-diagrams, and compare the faces, isolated points and regularity of a quasi-diagram with the ones of its expansions and contractions, see Section 5 for detail.  Recall that a chord diagram is a quasi-diagram without isolated points, and a quasi-diagram $\alpha$ is maximal if $\zeta\alpha$ has only one orbit, see Definition \ref{defn-dia}. The following connects quasi-diagrams and chord diagrams.

\begin{proposition} {\rm (Proposition \ref{prop-mdisptext})}
	Every nontrivial quasi-diagram is an iterated expansion of a chord diagram, and every nontrivial maximal quasi-diagram is an iterated expansion of a maximal chord diagram. 
\end{proposition}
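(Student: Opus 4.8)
The plan is to induct on the number of isolated points of $\alpha$, using contraction as the operation inverse to expansion. Let $\alpha\in\mathfrak{S}_n$ be a nontrivial quasi-diagram. If $\alpha$ has no isolated point then it is by definition a chord diagram and there is nothing to prove, so this serves as the base case. Otherwise I would choose an isolated point $p$ and let $\alpha'$ be the quasi-diagram obtained by contracting $p$. By the comparison of isolated points in Section 5, $\alpha$ is an expansion of $\alpha'$, and $\alpha'$ has exactly one fewer isolated point than $\alpha$; since contraction of an isolated point deletes only a fixed point and relabels, every transposition (chord) of $\alpha$ survives in $\alpha'$, so $\alpha'$ is again nontrivial. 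The induction hypothesis expresses $\alpha'$ as an iterated expansion of a chord diagram $\beta$, and prepending the single expansion from $\alpha'$ back to $\alpha$ exhibits $\alpha$ as an iterated expansion of the same $\beta$. This proves the first assertion; note that nontriviality guarantees that the terminal chord diagram $\beta$ is genuinely nonempty.

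For the second assertion I would run the identical induction within the class of maximal quasi-diagrams. The only extra ingredient is that contraction of an isolated point preserves maximality, which I would extract from the face comparison of Section 5: deleting an isolated point does not change the number of faces, i.e. the number of orbits of $\zeta\alpha$. Granting this, if $\alpha$ is maximal then $\zeta\alpha$ has a single orbit, hence so does $\zeta'\alpha'$ for the contracted diagram $\alpha'$, so $\alpha'$ is again maximal (and nontrivial, as above). The induction therefore terminates at a maximal chord diagram $\beta$, and reversing the chain of contractions writes $\alpha$ as an iterated expansion of $\beta$.

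The main obstacle is precisely the face-preservation step underlying the maximal case; the set-theoretic bookkeeping of the induction is routine once contraction is set up as the inverse of expansion. The content is a local surgery on the cycle structure of $\zeta\alpha$: if $p=\alpha(p)$ is isolated then $\zeta\alpha$ sends $p\mapsto\zeta(p)$, and contracting $p$ reroutes the predecessor of $p$ directly to $\zeta(p)$, splicing a single point out of one cycle of $\zeta\alpha$ without merging or splitting orbits. Verifying that this surgery leaves the orbit count unchanged — uniformly in the position of $p$ and in the surrounding chords — is the crux, and it is exactly the face comparison that Section 5 provides, which I would simply invoke here.
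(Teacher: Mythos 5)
Your proof is correct and follows essentially the same route the paper intends: the paper states the proposition as a summary of its contraction machinery (Lemmas \ref{lem-facered} and \ref{lem-isoext2}, Proposition \ref{prop-ptredmax}), and your induction on the number of isolated points — contracting at an isolated point, noting that exactly one isolated point disappears while all chords and the face count survive, and then reversing the chain of contractions via $\iota_i\delta_i=\id$ — is precisely that argument written out.
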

 
 The proposition provides a possible way to restrict problems to the special case of chord diagrams when working with quasi-diagrams. For instance, we apply it to reattain a counting formula of maximal quasi-diagrams by using the one of maximal chord diagrams, see Proposition \ref{prop-enum} (2).

In the last part of the paper, we discuss maximal chord diagrams. We show in Proposition \ref{prop-mcd} several equivalence characterization for maximal chord diagrams by using Koszul dual and the conjugate action of the dihedral group. The subset of $\frak{M}_n$ ($n \geq 3$) consisting of maximal chord diagrams turns out to the ``biggest'' subset closed under taking rotations and Koszul dual, see Proposition \ref{prop-setmcd}.  Moreover, by a recent result of Chang and Schroll \cite{CS}, maximal chord diagrams exactly correspond to those gentle algebras $A$, such that $A$ has finite global dimension and  $D^b(\mod A)$ has no full exceptional sequences, see Remark \ref{rem-tg-mc}.

\subsection*{Acknowledgments}
This work is partially supported by the National Natural Science Foundation of China (Grant Nos. 12131015,  12161141001 and 12371042) and the Innovation Program for Quantum Science and Technology (Grant No. 2021ZD0302902). 

\section{Preliminaries}

In this paper all algebras considered are basic. A path in a quiver $Q$ is a sequence $a_1a_2\cdots a_l$ (composed from left to right) of arrows with $t(a_i)=s(a_{i+1})$ for all $i=1, 2, \cdots, l-1$, where for each $i$, $s(a_i)$ and $t(a_i)$ denote the source and target of $a_i$ respectively, $l$ is called the \emph{length} of the path. We use $Q_l$ to denote the set of paths of length $l$ in $Q$, in particular, $Q_0$ is the set of vertices which are identified with trivial paths, and $Q_1$ is the set of arrows. Maps are composed from right to left, that is the composite of $f\colon  X \to Y$ and $g\colon Y \to Z$ is denoted by $g\circ f\colon X \to Z$.

\begin{definition} \label{defn-gent}
 An algebra $A$ is called a {\it locally gentle algebra} if it is isomorphic to $\k Q/I$, where
\begin{itemize}
\item[(1)] $Q$ is a finite quiver, and for every vertex $i\in Q_0$, there are at most two arrows ending at $i$
 and at most two arrows starting at $i$;
\item[(2)] $I$ is generated by paths of length two;
\item[(3)] for every arrow $a \in Q_1$, there is at most one arrow $b $ such that 
 $ab \in  I$, and at most  one arrow $c $ such that
$ca  \in I$;
 and there is at most one arrow $b' $ such that 
 $ab' \notin I$, and at most one arrow $c' $ such that 
 $c'a \notin I$.
\end{itemize}
A locally gentle algebra is called a {\it gentle algebra } if it is finite dimensional. 
\end{definition}

\begin{remark} \label{rem-maxpath}
	A (nonzero) path in $A=\k Q/I$ means a path $p$ in $Q$ such that $p\not\in I$, i.e., $p\ne 0$ in $A$. By definition, for a locally gentle algebra, each arrow $a_0$ either appears in a unique maximal path $\cdots a_{-1}a_0a_1\cdots$, or there exists an oriented cycle $a_0a_1\cdots a_ra_0$ in $A$. Recall that a path $p$ is maximal in $A$ if $pa=ap=0$ for any arrow $a$. 
\end{remark}
Note that the permutation $\alpha \in \frak{S}_n$ associated to a gentle algebra $A$ mentioned in the introduction is an involution, i.e., $\alpha^2 = \id$. 
Thus the cycles of $\alpha$ is of length $2$ or  $1$. The following definitions  mimic those in \cite{CS}. 

\begin{definition} \label{defn-dia}
Let $\zeta_n = (12 \dots n), \alpha \in \frak{S}_n$.
\begin{itemize}
\item[(1)] We call $\alpha$ a {\it quasi-diagram} if $\alpha$ is an involution. The identity $\id \in \frak{S}_n$ is called the {\it trivial quasi-diagram}. 

\noindent Now let  $\alpha \in \frak{S}_n$ be a quasi-diagram.
\item[(2)] A cycle of length $2$ of  $\alpha$ is called a {\it chord} of $\alpha$, a point fixed by $\alpha$ is called an {\it isolated point} of $\alpha$. 
\item[(3)] We call $\alpha$ a {\it chord diagram} if it is isolated point free. 
\item[(4)] Write $\zeta_n \alpha= w_1\cdots w_r$ as a complete product of disjoint cycles, where complete means that each $i$ occurs in some cycle  in the product, and we will distinguish between the 1-cycles $(i)$ and $(j)$ for $i\neq j$, although they are both the identity mapping viewed as permutations. Then each $w_i$ is called a {\it face} of $\alpha$. By abuse of notations, we will also call an orbit of $\zeta_n \alpha$ a face. 
\item[(5)] We say that $\alpha$ is {\it maximal} if it has only one face. 
\end{itemize}
We use $\frak{D}_n$ and $\mathfrak M_n$ to denote the set of quasi-diagrams and the subset of maximal chord diagrams in $\frak{S}_n$ respectively.
\end{definition}

For the rest of the paper, 
$\zeta_n$  always  denotes the $n$-cycle
$
(12\cdots n)\in \frak{S}_n
$, 
which is also simply denoted by $\zeta$ when there is no confusion on $n$.

\begin{example}
Let $\alpha = (12)(45) \in \frak{D}_5$. Then $(13), (45)$ are chords of $\alpha$, and $3$ is an isolated point of $\alpha$. Since $\zeta_5\alpha = (134)(2)(5)$, $\alpha$ has three faces: $(134),(2),(5)$. 
\end{example}

\begin{remark} \label{rmk-1}
For every gentle algebra $A$, we can associate to $A$ a {\it marked surface} $\mathbb{S}_A$, 
which is an oriented surface with boundary so that the ribbon graph of $A$ can be (filling) 
embedded \cite{FL,OPS}. The notion of marked surfaces is shown
to be very useful in the study of $D^b(\mod A)$, 
the bounded derived category  
of the category $\mod A$ consisting of finitely generated right $A$-modules \cite{CS,LSV,OPS}.

Let $A$ be a gentle algebra with one maximal path, and  $\alpha$ be the associated quasi-diagram. Let $\widehat{\mathbb{S}_A}$ be the surface without boundary obtained from $\mathbb{S}_A$ by gluing an open disc to each of the boundary components of $\mathbb{S}_A$.  It is not hard to check that 
\begin{align*}
 V :=&  \#\{\text{vertices  in }\widehat{\mathbb{S}_A}\} = \#\{\text{isolated points of }\alpha\} +1, \\
 E :=&  \#\{\text{edges  in }\widehat{\mathbb{S}_A}\} = \#\{\text{isolated points of }\alpha\} +\#\{\text{chords of }\alpha\}, 
\end{align*}
and 
$$
\{\text{faces  in }\widehat{\mathbb{S}_A}\} \overset{1:1}{\longleftrightarrow}\{\text{faces of }\alpha\}
$$
(which explains the name faces of $\alpha$). 

Let $F := \#\{\text{faces  in }\widehat{\mathbb{S}_A}\}$.
By the Euler characteristic formula $2-2g = \chi = V-E+F$, we get the following formula for genus of $\widehat{\mathbb{S}_A}$ (as well as $\mathbb{S}_A$)
\begin{equation} \label{equ-gen}
2g = \#\{\text{chords of }\alpha\} -  \#\{\text{faces of } \alpha\} +1. 
\end{equation}
It implies that  $\mathbb{S}_A$ has maximal possible genus if $\alpha$ is maximal. 
\end{remark}

We recall the following notion of Koszul algebras, which are an important class of graded algebras with nice homological properties. By definition, a locally gentle algebra is quadratic monomial and hence a Koszul algebra.

\begin{definition}[{\cite[Definition 1.2.1]{BVW}}]
A positively graded algebra $A = \bigoplus_{i \geq 0} A_i$ is called {\it Koszul} if $A_0$ is semisimple and if the graded right $A$-module $A_0$ admits a graded projective resolution 
$$
\xymatrix{
\cdots \ar[r] &P^i \ar[r] &\cdots \ar[r]  & P^1 \ar[r] & P^0 \ar[r] & A_0 \ar[r] & 0 
}
$$
such that each $P^i$ is generated by its component in degree $i$, i.e., $P^i = P^{i}_iA$.
\end{definition}

The Koszul dual $A^!$ of a Koszul algebra $A$ is defined to be the Yoneda algebra $\operatorname{Ext}_A(A_0,A_0)$ \cite{BVW}. It is well known that $A^!$ is again a Koszul algebra, and $(A^!)^!\cong A$ as graded algebras.

\begin{proposition}[{\cite[Proposition 3.4]{BH}}] \label{prop-kos}
A locally gentle algebra $A = \k Q/I$ (with grading given by path lengths) is Koszul. The Koszul dual $A^!$ of $A$ is isomorphic to the locally gentle algebra $\k Q^!/I^!$ where
\begin{itemize}
\item[(1)] the quiver $Q^!$ is equal to the opposite quiver of $Q$, that is, the quiver obtained from $Q$ by reversing all arrows;
\item[(2)] the ideal $I^!$ is generated by the opposites of the paths $p$ of length two in $Q$ which do not appear in $I$.
\end{itemize}
\end{proposition}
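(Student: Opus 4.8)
The plan is to deduce both assertions from the theory of quadratic algebras over a semisimple base, after noting that a locally gentle algebra is a quadratic monomial algebra. Write $A_0 = \k Q_0$ and $A_1 = \k Q_1$. Since $Q$ is finite, $A_0$ is a finite product of copies of $\k$, hence semisimple, and $A = \bigoplus_{i\ge 0}A_i$ is a positively graded, locally finite-dimensional algebra with $A_i$ spanned by the nonzero paths of length $i$. By Definition \ref{defn-gent}(2) the ideal $I$ is generated by paths of length two, so $A$ is quadratic: identifying $A_1\otimes_{A_0}A_1$ with the span $\k Q_2$ of the length-two paths of $Q$, one has $A = T_{A_0}(A_1)/(R)$, where the relation bimodule $R\subseteq \k Q_2$ is spanned by the length-two paths lying in $I$.

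First I would prove that $A$ is Koszul. The generators of $I$ are monomials (paths), so they form the reduced Gröbner basis of $I$ with respect to any admissible length-compatible order on paths, and this basis is quadratic. A quadratic algebra admitting a quadratic Gröbner basis is Koszul; equivalently, one may exhibit the monomial $\k$-basis of $A$ consisting of the paths that avoid every length-two generator of $I$ and invoke the classical theorem that quadratic monomial algebras are Koszul. Granting Koszulity, the Koszul dual $A^! = \Ext_A(A_0,A_0)$ is identified with the quadratic dual $T_{A_0}(A_1^*)/(R^\perp)$, where $A_1^*$ is the $A_0$-bimodule dual of $A_1$ and $R^\perp\subseteq A_1^*\otimes_{A_0}A_1^*$ is the annihilator of $R$ under the perfect pairing with $A_1\otimes_{A_0}A_1$.

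Next I would compute this quadratic dual. The basis $\{a^*\}_{a\in Q_1}$ of $A_1^*$ is indexed by the arrows, and since an arrow $a\colon i\to j$ forces $a^*$ into the slot $j\to i$, this basis is exactly the arrow set of the opposite quiver $Q^{\op}$, giving $Q^! = Q^{\op}$ and hence (1). The length-two paths of $Q$ form a basis of $\k Q_2$ whose dual basis pairs with it by Kronecker delta, so $R^\perp$ is spanned precisely by the dual vectors indexed by the length-two paths $p$ with $p\notin I$. Under the reversal that identifies $A_1^*\otimes_{A_0}A_1^*$ with the length-two paths of $Q^{\op}$, these dual vectors are the opposites of the length-two paths of $Q$ that do not appear in $I$; thus $I^! = (R^\perp)$ is generated by them, which is (2).

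Finally I would check that $B := \k Q^{\op}/I^!$ is again locally gentle, i.e.\ satisfies Definition \ref{defn-gent}(1)--(3). Condition (1) is immediate, as reversing arrows only interchanges in-degree and out-degree at each vertex and so preserves the bound of two on each side; condition (2) holds since $I^!$ is generated in length two by construction. Condition (3) is the only place that uses the full strength of the gentle axioms, and I expect it to be the main (if routine) point. For an arrow $a$, passing to $Q^{\op}$ reverses the order of composition, and the relations of $B$ are the opposites of the length-two paths not in $I$; hence each of the four counting clauses of Definition \ref{defn-gent}(3) for $B$ at $a^*$ translates, after this reversal and the exchange of lying in $I$ with not lying in $I$, into one of the four clauses for $A$ at $a$, with incoming and outgoing arrows interchanged. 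Since those hold by hypothesis, $B$ is locally gentle, and this completes the identification $A^!\cong B$.
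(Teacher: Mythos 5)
Your argument is correct: it is the standard proof that a quadratic monomial algebra is Koszul and that its Koszul dual is the quadratic dual $T_{A_0}(A_1^*)/(R^\perp)$, followed by the routine verification that the complementary relations on the opposite quiver again satisfy the gentle axioms. The paper itself gives no proof of this proposition --- it is quoted from \cite[Proposition 3.4]{BH} --- and your route is essentially the one carried out there, so there is nothing substantive to compare beyond noting that your sketch fills in the details the paper delegates to the reference.
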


Thus for a locally gentle algebra $A = \k Q/I$, a path with full relations
$$
\begin{tikzpicture}
\draw (-2,0) circle[radius= 0.17 em];
\draw (-0.7,0) circle[radius= 0.17 em];
\draw (0.4,0) node[right] {$\cdots$};
\draw (2.25,0) circle[radius= 0.17 em];
\draw (3.35,0) node[right] {$\cdots$};
\draw (5.2,0) circle[radius= 0.17 em];

\draw (1.6,0) node[below] {$a_i$};
\draw (2.9,0) node[below] {$a_{i+1}$};

\draw[->] (-1.85, 0) --  (-0.85,0) ; 
\draw[->] (-0.55, 0) --  (0.45,0) ; 
\draw[->] (1.1, 0) --  (2.1,0) ; 
\draw[->] (1.1, 0) --  (2.1,0) ; 
\draw[->] (2.4, 0) --  (3.4,0) ; 
\draw[->] (4.05, 0) --  (5.05,0) ; 

\draw[densely dotted] (-1, 0) arc (180:0:0.3);
\draw[densely dotted] (1.945, 0) arc (180:0:0.3);
\end{tikzpicture}
$$
 will become a (nonzero) path in the Koszul dual $A^! = \k Q^!/I^!$. Here a \emph{path with full relations} in $A$ means a path in the quiver $Q$ such that $a_ia_{i+1} \in I$ for any two consecutive arrows $a_i, a_{i+1}$ in the path.

Then we obtain a characterization of the global dimension of $A$, which is a special case of the following well-known result (\cite{AH}).
\begin{lemma}\label{lem-gldim}
	Let $A=\k Q/I$ be a quadratic monomial algebra, where $Q$ is a finite quiver, and $I$ is an ideal generated by a set of paths of length 2. Then
 $\gldim A$  equals the maximal length of paths with full relations in $A$. 
\end{lemma}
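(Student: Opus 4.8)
The plan is to exploit the Koszul duality already set up above. Since $A=\k Q/I$ is quadratic monomial, it is Koszul for the path-length grading (Proposition \ref{prop-kos}), and its degree-zero part $A_0=\bigoplus_{i\in Q_0}\k e_i$ is semisimple, equal to $\bigoplus_i S_i$ where $S_i$ is the simple module at vertex $i$. For any positively graded algebra with semisimple degree-zero part one has $\gldim A=\operatorname{pd}A_0$, and the projective dimension of $A_0$ is the length of its minimal graded projective resolution, namely
$$
\gldim A = \sup\{\, i \geq 0 : \Ext_A^i(A_0, A_0) \neq 0 \,\}.
$$
So the first step is to record this reduction, allowing the supremum to be $+\infty$.

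Next I would invoke the defining property of the Koszul dual: $A^! = \Ext_A(A_0,A_0)$, whence $\Ext_A^i(A_0,A_0)\cong (A^!)_i$, the degree-$i$ homogeneous component of $A^!$. By Proposition \ref{prop-kos} the algebra $A^!=\k Q^!/I^!$ is again locally gentle with the path-length grading, so $(A^!)_i$ has as a basis the nonzero paths of length $i$ in $Q^!$. Thus $\Ext_A^i(A_0,A_0)\neq 0$ if and only if $A^!$ contains a nonzero path of length $i$.

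The combinatorial heart is then to match nonzero paths in $A^!$ with paths with full relations in $A$. By Proposition \ref{prop-kos}(1)--(2), $Q^!$ is the opposite quiver of $Q$, and a length-two path $\bar a_{i+1}\bar a_i$ in $Q^!$ (the opposite of $a_ia_{i+1}$ in $Q$) is nonzero in $A^!$ precisely when $a_ia_{i+1}\in I$. Consequently a path $\bar a_l\cdots\bar a_1$ in $Q^!$ is nonzero in $A^!$ if and only if every consecutive pair $a_ia_{i+1}$ lies in $I$, i.e.\ if and only if the reversed path $a_1\cdots a_l$ is a path with full relations in $A$. This reversal gives a length-preserving bijection between nonzero paths of $A^!$ and paths with full relations of $A$, exactly as in the picture preceding the statement. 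Combining the three steps, $\gldim A=\sup\{i:(A^!)_i\neq 0\}$ equals the maximal length of a path with full relations in $A$.

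The basis description of $(A^!)_i$ and the reversal bijection are routine; the step carrying the real content — and the one I would treat most carefully — is the opening reduction $\gldim A=\sup\{i:\Ext_A^i(A_0,A_0)\neq 0\}$ together with the identification $\Ext_A^i(A_0,A_0)\cong (A^!)_i$. One must check that the supremum is genuinely permitted to be $+\infty$ (which happens exactly when $A$ contains an oriented cycle all of whose consecutive pairs are relations, producing arbitrarily long full-relation paths) and that passing from the projective dimensions of the individual simples $S_i$ to $\gldim A$ is legitimate. Granting the standard Koszul machinery this is immediate; alternatively one can bypass Koszul duality entirely, since for a quadratic monomial algebra the $n$-th term of the minimal projective resolution of $S_i$ is generated by the full-relation paths of length $n$ starting at $i$, so that $\operatorname{pd}S_i$ is the length of the longest such path and $\gldim A=\max_i\operatorname{pd}S_i$ yields the claim.
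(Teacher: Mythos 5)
Your argument is correct, but note that the paper does not actually prove this lemma: it is stated as ``a special case of the following well-known result'' with a citation to \cite{AH}, so there is no internal proof to match. Your Koszul-duality route is entirely consistent with how the paper itself uses the statement (the proof of Corollary \ref{cor-kos} explicitly equates $\gldim A$ with the maximal length of nonzero paths in $A^!$ and with the maximal length of paths with full relations in $A$), and each step --- $\gldim A=\operatorname{pd}_A A_0=\sup\{i:\Ext_A^i(A_0,A_0)\neq 0\}$ by minimality of the graded resolution, $\Ext_A^i(A_0,A_0)\cong (A^!)_i$ by Koszulness, and the length-preserving reversal bijection between nonzero paths of $A^!$ and full-relation paths of $A$ --- is sound. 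Two small caveats: Proposition \ref{prop-kos} as stated covers only \emph{locally gentle} algebras, whereas the lemma is asserted for arbitrary quadratic monomial algebras, so you should invoke the general facts that quadratic monomial algebras are Koszul and that their quadratic dual is $\k Q^{\op}$ modulo the complementary monomial relations (your phrase ``$A^!$ is again locally gentle'' is only accurate in the locally gentle case); and the reduction $\gldim A=\operatorname{pd}_A A_0$ requires the graded-to-ungraded global dimension comparison when $A$ is infinite dimensional, which you rightly flag. Your closing alternative --- reading off the $n$-th term of the minimal projective resolution of each simple $S_i$ as being generated by the length-$n$ full-relation paths starting at $i$ --- is the most elementary and self-contained route (it is exactly Anick's chain description for monomial algebras, cf.\ \cite{A}), and avoids both caveats; if a proof were to be included, that is the one I would write out.
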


\begin{corollary} \label{cor-kos}
Let $A$ be a gentle algebra with one maximal path, and $\alpha \in \frak{D}_n$ the associated quasi-diagram. 
Then the Koszul dual $A^!$ has only one maximal path if and only if $\gldim A = n-1$.
\end{corollary}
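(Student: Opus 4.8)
The plan is to translate both sides of the equivalence into statements about nonzero paths in $A^!$, using the description of the Koszul dual in Proposition~\ref{prop-kos} together with the global dimension formula in Lemma~\ref{lem-gldim}. First I would record two bookkeeping facts. Since $A$ is built by gluing the vertices of a single $\mathbb{A}_n$ quiver, and gluing identifies only vertices and never arrows, the quiver $Q_A$ has exactly $n-1$ arrows; by Proposition~\ref{prop-kos} the quiver of $A^!$ is $Q_A^{\op}$, so $A^!$ again has exactly $n-1$ arrows. Next, the paragraph preceding Lemma~\ref{lem-gldim} furnishes a length-preserving bijection between paths with full relations in $A$ and nonzero paths in $A^!$; combined with Lemma~\ref{lem-gldim} this yields that $\gldim A$ equals the maximal length of a nonzero path in $A^!$. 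Because $A^!$ is locally gentle, each arrow has at most one nonzero successor and one nonzero predecessor, so a nonzero path that does not repeat an arrow has length at most $n-1$.

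For the implication $\gldim A = n-1 \Rightarrow A^!$ has one maximal path, I would take a nonzero path $P$ in $A^!$ of length $n-1$, which exists by the previous paragraph. It visits $n-1$ arrows, and it cannot repeat one: by uniqueness of successors a repetition would produce an oriented cycle in $A^!$, hence nonzero paths of arbitrary length, forcing $\gldim A=\infty$. Thus $P$ runs exactly once through each of the $n-1$ arrows; being of maximal length it cannot be prolonged, so it is a maximal path, and as it already contains every arrow, Remark~\ref{rem-maxpath} shows it is the \emph{only} maximal path. For the converse, if $A^!$ is a gentle algebra with a single maximal path, then it corresponds, under the one-to-one correspondence between gentle algebras with one maximal path and quasi-diagrams recalled in the introduction, to a quasi-diagram in $\frak{D}_m$ for some $m$; since its arrow count is both $m-1$ and $n-1$, we get $m=n$, so its unique maximal path has length $n-1$. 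As every nonzero path of $A^!$ is a subpath of this maximal path, the maximal length of a nonzero path is $n-1$, whence $\gldim A = n-1$.

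The point requiring care is the interaction with oriented cycles. A locally gentle $A^!$ may be infinite-dimensional, so the statement must be read with ``$A^!$ has only one maximal path'' meaning that $A^!$ is a (finite-dimensional, by Definition~\ref{defn-gent}) gentle algebra with exactly one maximal path — equivalently, that $A^!$ has no oriented cycle and all $n-1$ arrows lie on a single maximal path. The crux is therefore the equivalence, established above via Lemma~\ref{lem-gldim}, between ``no oriented cycle together with a nonzero path of length $n-1$'' and the extremal value $\gldim A=n-1$; concretely, I would isolate as the key lemma that repeating an arrow along the unique-successor walk forces an oriented cycle and hence $\gldim A=\infty$. Once this incompatibility of $\gldim A=n-1$ with cycles (and with a second maximal path) is in place, both implications close immediately, and one also reads off the side remark that $n-1$ is precisely the largest finite value $\gldim A$ can take.
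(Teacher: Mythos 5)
Your proof is correct and follows essentially the same route as the paper's: both count the $n-1$ arrows of $Q^!$, invoke the locally gentle dichotomy (finite-dimensional with every arrow on a unique maximal path versus infinite-dimensional with an oriented cycle), and use Lemma~\ref{lem-gldim} to identify $\gldim A$ with the maximal length of nonzero paths in $A^!$. You merely spell out the details that the paper compresses into ``the corollary follows easily,'' including the correct reading of ``$A^!$ has only one maximal path'' as requiring finite-dimensionality.
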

\begin{proof}
	Note that the quivers $Q$ and $Q^!$ have exactly $n-1$  arrows. Moreover, since $A^!$ is also locally gentle, either $A^!$ has finite dimension and each arrow of $Q^!$ appears in a unique maximal path, or $A^!$ has infinite dimension and contains a (nonzero) oriented cycle. Now the corollary follows easily from Lemma \ref{lem-gldim}, which says that $\gldim A$ equals the maximal length of nonzero paths in $A^!$, or the maximal length of paths with full relations in $A$.
\end{proof}

\begin{remark}\label{rem-kosgldim}
	Let $\alpha \in \frak{D}_n$ be a quasi-diagram and $A$ be the gentle algebra. We will say that the Koszul dual of $\alpha$ exists if the Koszul dual $A^!$ of $A$ also has one maximal path, and in this case, the quasi-diagram $\alpha^!$ associated to $A^!$, denoted by quasi-diagram $\alpha^!$, is called the {\it Koszul dual quasi-diagram} of $\alpha$. Note that Proposition \ref{prop-kos} also implies that $\gldim A^! = n-1$. 
\end{remark}

\begin{example} \label{exm-id2}
Let $\alpha = \id \in \frak{D}_2$, and $A$ the associated gentle algebra. Then $A$ is the path algebra of the quiver
$$
\xymatrix{
\circ \ar[r] & \circ
}.
$$
Thus $\gldim A = 1$. By Corollary \ref{cor-kos}, the Koszul dual $\alpha^!$ exists, and we can check that $\alpha^! = \alpha$. 
\end{example}

\section{Global Dimension} \label{sec-pro}

In this section, we prove the first main result mentioned in the introduction. We begin with an easy lemma. 

\begin{lemma} \label{lem-1} 
Let $\alpha \in \frak{D}_n$ be a quasi-diagram and $\zeta = (123\cdots n)$. Then
\begin{itemize} 
\item[(1)]  The elements $1$ and  $\alpha(n)$ have the same $\zeta \alpha$-orbit;
\item[(2)]  The elements $n$ and  $\alpha(1)$ have the same $\alpha \zeta$-orbit. 
\end{itemize}
\end{lemma}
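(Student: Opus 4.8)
The plan is to verify both statements by a single direct computation, using only that $\alpha$ is an involution together with the right-to-left composition convention fixed in the preliminaries, under which $(\zeta\alpha)(x) = \zeta(\alpha(x))$ and $(\alpha\zeta)(x) = \alpha(\zeta(x))$. In each part it suffices to exhibit the two elements in question as consecutive entries of a single orbit, i.e. to show that one is the image of the other under one application of the relevant permutation.

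For part (1), I would apply $\zeta\alpha$ to $\alpha(n)$. First, using $\alpha^2 = \id$, I get $\alpha(\alpha(n)) = n$; then, since $\zeta = (12\cdots n)$ sends $n$ to $1$, I obtain $\zeta(n) = 1$. Hence $(\zeta\alpha)(\alpha(n)) = 1$, so $1$ is the image of $\alpha(n)$ under $\zeta\alpha$, and the two elements lie in the same $\zeta\alpha$-orbit.

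For part (2), I would proceed symmetrically by applying $\alpha\zeta$ to $n$. Here $\zeta$ acts first, giving $\zeta(n) = 1$, and then $\alpha(1)$, so that $(\alpha\zeta)(n) = \alpha(1)$. Thus $\alpha(1)$ is the image of $n$ under $\alpha\zeta$, and $n$ and $\alpha(1)$ share an $\alpha\zeta$-orbit.

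The only point requiring care — and the closest thing to an obstacle in an otherwise immediate argument — is keeping the composition order straight: because maps act on the left and compose right-to-left, the factor $\alpha$ in $\zeta\alpha$ must be applied before $\zeta$, and conversely the factor $\zeta$ in $\alpha\zeta$ must be applied before $\alpha$. Once this convention is respected, both claims drop out from the two one-step calculations above with no further input.
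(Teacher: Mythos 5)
Your proof is correct and is essentially identical to the paper's: both parts are verified by the same one-step computations $(\zeta\alpha)(\alpha(n)) = \zeta(\alpha^2(n)) = \zeta(n) = 1$ and $(\alpha\zeta)(n) = \alpha(1)$, using only that $\alpha$ is an involution. No differences worth noting.
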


\begin{proof}
(1) follows from 
$\zeta \alpha (\alpha (n)) = \zeta \alpha^2 \zeta^{-1}(1) = \zeta(n) = 1$,
since $\alpha^2 = \id$. 

(2) follows from $\alpha \zeta  (n) = \alpha (1)$. 
\end{proof}

Let $A = \k Q_A/I_A$ be the gentle algebra associated to $\alpha$. Note that the global dimension of $A$ is equal to the maximal number $m$ such that there is a path of length $m$ with full relations in $A$.

Before state our main result, we introduce two special classes of orbits of the set $\{1,2,\cdots, n\}$ under the natural actions of $\zeta\alpha$ and $\alpha\zeta$, say
$$
\mathscr{A}_{\alpha} := \{ \text{orbits of } \zeta \alpha \text{ containing no isolated points of }  \alpha  \text{ nor } 1\},
$$
and
$$
\mathscr{B}_{\alpha} := \{ \text{orbits of } \alpha \zeta \text{ containing no isolated points of }  \alpha  \text{ nor } n\}.
$$
The following result is based on the key observation that
a path with full relations in $A$ corresponds to 
the consecutive non-isolated points in a face of $\alpha$ (i.e., an orbit of $\zeta\alpha$).  

\begin{theorem} \label{thm-main}
Let $A$ be a gentle algebra with one maximal path, and $\alpha \in \frak{D}_n$ the associated quasi-diagram. Then the following are equivalent. 
\begin{itemize}
\item[(1)] $\gldim A  < \infty$.
\item[(2)] $\#(\mathscr{A}_{\alpha}) = 0$. 
\item[(3)] $\#(\mathscr{B}_{\alpha}) = 0$. 
\end{itemize}
\end{theorem}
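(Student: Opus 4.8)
The plan is to translate the homological condition into a purely combinatorial statement about orbits, using the machinery assembled in the preliminaries. By Lemma~\ref{lem-gldim}, $\gldim A$ equals the maximal length of a path with full relations in $A$, so $\gldim A < \infty$ exactly when there is a finite upper bound on the lengths of such paths, equivalently when $A^!$ contains no nonzero oriented cycle (by Corollary~\ref{cor-kos} and Remark~\ref{rem-maxpath}). The key dictionary, already flagged before the theorem statement, is that a path with full relations in $A$ corresponds to a run of consecutive non-isolated points inside a single face of $\alpha$, i.e.\ inside an orbit of $\zeta\alpha$. So my first step would be to make this correspondence precise: spell out how the arrows of $Q_A$ and the relations in $I_A$ are encoded by $\zeta$ and $\alpha$ via the gluing datum, and verify that traversing a path with full relations amounts to applying $\zeta\alpha$ repeatedly while staying away from isolated points of $\alpha$ and from the ``wrap-around'' point.

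Granting that dictionary, the equivalence $(1)\Leftrightarrow(2)$ should fall out. The plan is to argue the contrapositive in each direction. If some orbit $O \in \mathscr{A}_\alpha$ contains no isolated point of $\alpha$ and does not contain $1$, then within $O$ the map $\zeta\alpha$ cycles through non-isolated points without ever hitting the distinguished point $1$; by Lemma~\ref{lem-1}(1) the point $1$ sits in the same orbit as $\alpha(n)$, and $1$ is precisely the place where a maximal path would have to terminate. The absence of $1$ and of isolated points from $O$ means the corresponding path with full relations can be iterated around the orbit indefinitely, producing an oriented cycle in $A^!$ and hence $\gldim A = \infty$. Conversely, if $\#(\mathscr{A}_\alpha)=0$, then every orbit of $\zeta\alpha$ either contains $1$ or contains an isolated point; in either case the corresponding path with full relations must stop (at an isolated point the path breaks because a fixed point of $\alpha$ forces a genuine maximal-path endpoint, and the point $1$ plays the role of the boundary that $\zeta$ cannot cross without wrapping), so all such paths have bounded length and $\gldim A < \infty$.

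For $(1)\Leftrightarrow(3)$ I would exploit the symmetry between the two constructions rather than redo the analysis. The map $\alpha\zeta$ is the ``mirror image'' of $\zeta\alpha$: note that $\alpha\zeta = \alpha(\zeta\alpha)\alpha^{-1}$ since $\alpha$ is an involution, so $\alpha\zeta$ and $\zeta\alpha$ are conjugate by $\alpha$, and conjugation by $\alpha$ sends isolated points to isolated points and swaps the roles of $1$ and $n$ via Lemma~\ref{lem-1}(2). The plan is therefore to establish a bijection between orbits of $\zeta\alpha$ and orbits of $\alpha\zeta$ (induced by $\alpha$) under which the ``contains an isolated point'' condition is preserved and the point $1$ corresponds to the point $n$. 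This immediately yields $\#(\mathscr{A}_\alpha) = 0 \iff \#(\mathscr{B}_\alpha) = 0$, giving $(2)\Leftrightarrow(3)$ and closing the cycle of equivalences; alternatively one can run the same path-with-full-relations argument reading paths from the other end, where $n$ rather than $1$ becomes the boundary point.

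The main obstacle I anticipate is making the correspondence in the first step airtight, in particular the boundary bookkeeping around the single distinguished point ($1$ for $\zeta\alpha$, $n$ for $\alpha\zeta$). The subtlety is why exactly the point $1$ (and not some other point) should be excluded in the definition of $\mathscr{A}_\alpha$: this comes from the fact that $A$ has a \emph{single} maximal path of length $n-1$, so the $n$ positions along that path are identified with $\{1,2,\dots,n\}$ and $\zeta$ encodes moving one step along the maximal path, with the transition from position $n$ back to position $1$ being the one step that is \emph{not} an arrow of $Q_A$. Verifying that an orbit's path with full relations closes up into a cycle precisely when it avoids both isolated points and this non-arrow transition is where the real content lies; once that is nailed down, the orbit-counting equivalences are essentially formal, and the $\alpha$-conjugacy symmetry handles the second half cleanly.
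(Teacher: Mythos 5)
Your proposal is correct and follows essentially the same route as the paper: the same dictionary between paths with full relations in $A$ and runs of consecutive non-isolated points in a face (so that infinite global dimension corresponds to an oriented cycle with full relations, i.e.\ an orbit avoiding both isolated points and the boundary point), and the same conjugacy $\alpha\zeta=\alpha(\zeta\alpha)\alpha^{-1}$ together with Lemma~\ref{lem-1} to transport orbits, isolated points, and the pair $1\leftrightarrow n$ between $\mathscr{A}_\alpha$ and $\mathscr{B}_\alpha$. The only cosmetic difference is that the paper carries out the homological half as $(1)\Leftrightarrow(3)$ using $\alpha\zeta$ and the point $n$, whereas you propose the mirror-image $(1)\Leftrightarrow(2)$; both are covered by your closing remark.
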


\begin{proof}
By assumption $A = \k Q_A/I_A$ is a gentle algebra obtained by gluing one $\mathbb{A} _n$  quiver $Q$. 
Let $\alpha \in \frak{D}_n$ be the associated quasi-diagram. 

{ $(2) \Leftrightarrow (3)$.} Clearly  $\alpha\zeta= \alpha(\zeta\alpha)\alpha^{-1}$. Hence a subset $\omega$ is an orbit of $\zeta\alpha$ if and only if $\alpha(\omega)=\{\alpha(i)\mid i\in \omega\}$ is an orbit of $\alpha\zeta$. 
By Lemma \ref{lem-1},  $1\in \omega$ if and only if $\alpha(n)\in \omega$, if and only if $n\in\alpha^{-1}(\omega) = \alpha(\omega)$. Moreover, $\omega$ contains a fixed point $i$ of $\alpha$ if and only if $\alpha(\omega)$ does. Thus we have shown that for any $w\in\mathscr{A}_\alpha$ if and only if $\alpha(\omega)\in \mathscr{B}_\alpha$, and the equivalence of (2) and (3) follows.

{$(1) \Leftrightarrow (3)$}. As mentioned in the introduction, $A$ is a subalgebra of $\k Q$, where $Q$ is the quiver
\[
\xymatrix{
	\overset{1}{\circ} \ar^-{a_1}[r] & \overset{2}{\circ} \ar^-{a_2}[r] & \cdots \ar^-{a_{n-1}}[r] & \overset{n}{\circ}.
}
\]
Let ${e}_i$ denote the trivial path at the vertex $i$ in $Q$. Clearly $e_1, e_2, \cdots, e_n$ form a complete set of orthogonal idempotents.

Consider the associated quasi-diagram $\alpha\in \mathfrak S_n$. Since $\alpha$ is an involution, we may write
$$
\alpha = (x_1,y_1) \cdots (x_u,y_u)(x_{u+1})\cdots (x_v)
$$
as a product of disjoint 2-cycles and 1-cycles. 
Then a complete set of orthogonal idempotents $\e_{1}, \dots, \e_{t}$  of $A$ is given
by 
$$
\e_i = \begin{cases} {e}_{x_i} + {e}_{y_i} & 1\le i\le u, \\ {e}_{x_i} & u+1\le i\le v, \end{cases}
$$
and $A= \bigoplus_{i=1}^t \k \e_i \bigoplus \rad \k Q$.

For any $1\le i\le n$, there exists a unique $1\le \f(i)\le t$, such that $i= x_{\mathrm f(i)}$ or $y_{\f(i)}$. Thus $\f$ defines a unique partition function
\[\f\colon \{1, 2,\cdots, n\}\to \{1,2,\cdots, t\}.\]
Note that $\f(i)=\f(j)$ if and only if $i=j$ or $i=\alpha(j)$ 

Then the quiver $Q_A$ is exactly the quiver with vertices $\{1, 2, \cdots, t\}$, arrows $\{a_1, a_2, \cdots, a_{n-1}\}$, and the source and target maps given by $s(a_i)=\f(i)$, and $t(a_i)=\f(i+1)$ for $i=1, 2, \cdots, n-1$. The defining relations of $A$ are 
$$\{a_ia_j\mid 1\le i, j\le n-1, \f(i+1)=\f(j), j\ne i+1\}.$$
Or in other words, a path $a_ia_j\in I_A$ if and only if $j = \alpha(i+1) = \alpha\zeta(i)$ and $j$ is not an isolated point of $\alpha$. 

By Lemma \ref{lem-gldim}, $A$ has infinite global dimension if and only if 
there exists an oriented cycle $a_{i_1}a_{i_2}\cdots a_{i_r}a_{i_1}$  with full relations (Figure \ref{cyc})
\begin{figure}[h]
	\centering
	\begin{tikzpicture}[x=20, y=20, baseline=(current bounding box)]
		\draw[->] (-1.9, 0.173) --  (-1.08,1.5916) ; 
		\draw[->] (-0.83,1.73) -- (0.83, 1.73) ; 
		\draw[->] (1.1, 1.557) -- (1.93, 0.1211) ; 
		\draw[->] (1.91, -0.173) -- (1.07, -1.6089) ; 
		\draw[->] (0.83, -1.73) -- (-0.83, -1.73) ; 
		\draw[dashed,->] (-1.1, -1.557) -- (-1.9,-0.173) ; 
		\draw (-2,0) circle[radius= 0.16 em];
		\draw (-1,1.73) circle[radius= 0.16 em];
		\draw (1, 1.73) circle[radius= 0.16 em];
		\draw (2, 0) circle[radius= 0.16 em];
		\draw (1, -1.73) circle[radius= 0.16 em];
		\draw (-1, -1.73) circle[radius= 0.16 em];
		\draw (-2,0) node[left] {$f(i_r)$};
		\draw (-1.5,1) node[left] {$a_{i_r}$};
		\draw (2,0) node[right] {$f(i_3)$};
		\draw (0,1.75) node[above] {$a_{i_1}$};
		\draw (-1.5,1.73) node[above] {$f(i_1)$};
		\draw (1.45,1) node[right] {$a_{i_2}$};
		\draw (1.5, 1.73) node[above] {$f(i_2)$};
		\draw[densely dotted] (-0.5, 1.73) arc (-5:-115:0.5);
		\draw[densely dotted] (0.5, 1.73) arc (-175:-65:0.5);
		\draw[densely dotted] (-0.5, -1.73) arc (0:120:0.5);
		\draw[densely dotted] (0.5, -1.73) arc (180:60:0.5);
		\draw[densely dotted] (1.72, 0.46) arc (115:245:0.5);
		\draw[densely dotted] (-1.74, 0.46) arc (60:-60:0.5);
	\end{tikzpicture}
	\caption{The oriented cycle with full relations} 
	\label{cyc}
\end{figure}
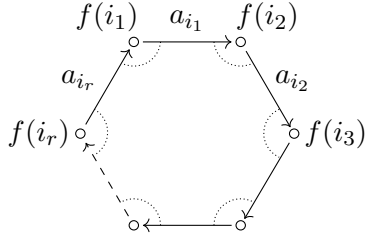
if and only if the subset $\omega=\{i_1, i_2, \cdots, i_r\}$ forms an $\alpha\zeta$-orbit which 
contains no isolated points of $\alpha$ nor $n$, i.e. $\omega\in \mathscr B_\alpha$ (Figure \ref{fig-2}).
\begin{figure}[h]
\small
$$
\xymatrix{
  & i_1 \ar[r]^-{\zeta}  \ar[r] &  i_1+1  \ar[d]^-{\alpha}&  \\
 i_r \ar[r]^-{\zeta} & i_r+1 \ar[u]^-{\alpha}  & i_2 \ar[r]^-{\zeta}  & i_2+1 \ar[d]^-{\alpha}  \\
 i_{r-1}+1 \ar[u]^-{\alpha} & \cdot \ar@{-->}[l] &  \cdot \ar@{-->}[d] & i_3 \ar@{--}[l] \\
 & i_s+1 \ar@{--}[u] & i_s \ar[l]_-{\zeta} &
}
$$
	\caption{The $\alpha\zeta$-orbit $w$} 
	\label{fig-2}
\end{figure}
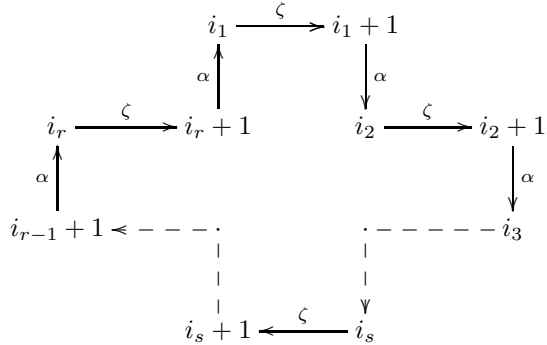
\end{proof}

We call a quasi-diagram $\alpha$ {\it regular} if it satisfies the equivalent conditions (2) and (3) in above theorem.  Then we draw the following consequence.

\begin{corollary}
Maximal quasi-diagrams are regular. 
\end{corollary}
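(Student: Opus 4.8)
The plan is to prove regularity by verifying condition (2) of Theorem~\ref{thm-main} directly, namely that $\#(\mathscr{A}_{\alpha}) = 0$. Recall that $\mathscr{A}_{\alpha}$ is defined as the set of orbits of $\zeta\alpha$ that contain neither the point $1$ nor any isolated point of $\alpha$. So the entire task reduces to showing that, under the maximality hypothesis, every $\zeta\alpha$-orbit fails at least one of these two avoidance conditions, hence no orbit can belong to $\mathscr{A}_{\alpha}$.

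First I would unwind the definition of maximality from Definition~\ref{defn-dia}(5): $\alpha$ being maximal means $\zeta\alpha$ has exactly one face, i.e.\ exactly one orbit on $\{1,2,\dots,n\}$. Since the orbits of $\zeta\alpha$ partition $\{1,2,\dots,n\}$ and there is only one of them, that unique orbit must be the whole set $\{1,2,\dots,n\}$. In particular it contains the point $1$.

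The conclusion is then immediate: the only orbit of $\zeta\alpha$ contains $1$, so it does not satisfy the requirement ``containing no $\dots$ $1$'' imposed in the definition of $\mathscr{A}_{\alpha}$. Consequently $\mathscr{A}_{\alpha} = \emptyset$, that is $\#(\mathscr{A}_{\alpha}) = 0$, which is exactly condition (2) of Theorem~\ref{thm-main}. By the equivalence established there, $\alpha$ is regular. (Symmetrically, one could argue via condition (3): the single $\alpha\zeta$-orbit is all of $\{1,2,\dots,n\}$ and so contains $n$, giving $\#(\mathscr{B}_{\alpha}) = 0$.)

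There is essentially no obstacle here; the statement is a formal consequence of the definitions once the equivalences of Theorem~\ref{thm-main} are in hand. The only point requiring a moment's care is purely bookkeeping, namely confirming that the avoidance condition in the definition of $\mathscr{A}_{\alpha}$ (or $\mathscr{B}_{\alpha}$) is triggered by the forced presence of the basepoint $1$ (respectively $n$) in the unique orbit, rather than needing anything about isolated points. No facts beyond Definition~\ref{defn-dia} and Theorem~\ref{thm-main} are used.
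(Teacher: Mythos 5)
Your argument is correct and is exactly the intended one: the paper states this corollary without proof as an immediate consequence of Theorem~\ref{thm-main}, since the unique $\zeta\alpha$-orbit of a maximal quasi-diagram is all of $\{1,\dots,n\}$ and hence contains $1$, forcing $\mathscr{A}_{\alpha}=\emptyset$. No difference in approach.
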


\begin{example} \label{exm-1}
Consider the quiver 
$$
\xymatrix{
Q = \overset{1}{\circ} \ar[r] & \overset{2}{\circ} \ar[r] &  \overset{3}{\circ} \ar[r] & \overset{4}{\circ}.
}
$$
Let $A$, and $A'$ be gentle algebras obtained by gluing $Q$ with quasi-diagrams
$$\alpha = (13)(24), \text{ and } \alpha' = (12) \in \frak{S}_4$$
respectively.  Then we have
\begin{align*}
\zeta_4 \alpha = (1432), \ \zeta_4 \alpha' = (134)(2). 
\end{align*}
Thus $\#(\mathscr{A}_\alpha) = 0$, and $\#(\mathscr{A}_{\alpha'}) = 1$. By Theorem \ref{thm-main}, we have $\gldim A < \infty$, and $\gldim A' = \infty$. 
\end{example}

For $j \in \{1, \dots, n-1 \}$, let $g_j$ be the smallest positive integer such that $(\alpha \zeta )^{g_j} (j)=n$ or an isolated point of $\alpha$, and let $g_n = 0$.
Similarly, for $i \in \{1, \dots , n\} \setminus \{\alpha(n) \}$, let $d_i$ be the smallest positive integer such that $(\zeta \alpha)^{d_i} (i)$ is an isolated point of $\alpha$ or $(\zeta \alpha)^{d_i} (i) = \alpha(n)$, and let $d_{\alpha(n)} = 0$.

\begin{proposition} \label{prop-gldim}
Let $\alpha \in \frak{D}_n$ be a regular quasi-diagram, and $A$ the associated gentle algebra. For $i , j \in \{1, \dots, n \}$, let $d_i,g_j$ as above. Then
\begin{align*}
\gldim A &= \max\{ g_j \mid j \text{ is an isolated point of } \alpha \text{ or } j = \alpha(1) \}   \\
&= \max\{d_i \mid i \text{ is an isolated point of } \alpha \text{ or } i = 1 \}.
\end{align*}
\end{proposition}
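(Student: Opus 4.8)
The plan is to reduce everything to the combinatorics of paths with full relations already isolated in the proof of Theorem \ref{thm-main}, and then prove the two displayed equalities in turn: the first by analyzing directly how such paths extend under $\alpha\zeta$, the second by transporting the first through conjugation by $\alpha$. Recall from that proof the explicit description: a sequence of arrows $a_{i_1}a_{i_2}\cdots a_{i_r}$ is a path with full relations in $A$ exactly when each $i_s\in\{1,\dots,n-1\}$, $i_{s+1}=\alpha\zeta(i_s)$, and $i_{s+1}$ is not an isolated point of $\alpha$. By Lemma \ref{lem-gldim}, $\gldim A$ is the maximal length $r$ of such a path, and regularity of $\alpha$ guarantees (via Theorem \ref{thm-main}) that every $\alpha\zeta$-orbit meets $\{n\}\cup\{\text{isolated points of }\alpha\}$, so these lengths are finite and the maximum is attained.

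The key observation for the first equality is that, once the left end $a_{i_1}$ is fixed, the path is forced: its maximal forward continuation runs through $i_1,\alpha\zeta(i_1),(\alpha\zeta)^2(i_1),\dots$ and must stop precisely when the next value first lands in $\{n\}\cup\{\text{isolated points of }\alpha\}$. Hence this maximal forward length equals $g_{i_1}$ by definition. I would then pin down which $i_1$ occur as the left end of a path that admits no backward extension: prepending an arrow $a_{i_0}$ requires $i_0=(\alpha\zeta)^{-1}(i_1)\neq n$ and $i_1$ not isolated, and since $\zeta(n)=1$ the condition $i_0=n$ reads $i_1=\alpha\zeta(n)=\alpha(1)$. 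Thus the non-extendable left ends are exactly the points that are isolated or equal to $\alpha(1)$. As every path with full relations sits inside a maximal one of at least its length, and each maximal one begins at such an $i_1$ with length $g_{i_1}$, we conclude $\gldim A=\max\{g_j\mid j\text{ isolated or }j=\alpha(1)\}$, with the degenerate case $i_1=n$ absorbed by the convention $g_n=0$.

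For the second equality I would invoke $\zeta\alpha=\alpha(\alpha\zeta)\alpha$, giving $(\zeta\alpha)^k(\alpha(x))=\alpha\bigl((\alpha\zeta)^k(x)\bigr)$ for all $k$ and $x$. Since $\alpha$ permutes the isolated points and sends $n\mapsto\alpha(n)$, $1\mapsto\alpha(1)$, it carries the terminal set $\{n\}\cup\{\text{isolated points}\}$ onto $\{\alpha(n)\}\cup\{\text{isolated points}\}$ and the start set $\{\alpha(1)\}\cup\{\text{isolated points}\}$ bijectively onto $\{1\}\cup\{\text{isolated points}\}$. Comparing the defining stopping conditions through this identity then yields $g_{i_1}=d_{\alpha(i_1)}$ for every start $i_1$, so the two maxima coincide term by term, with $g_n=0$ matching $d_{\alpha(n)}=0$.

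I do not expect a genuine obstacle here so much as a need for careful bookkeeping, concentrated in the second paragraph: establishing the exact backward-extension condition and checking that the boundary conventions $g_n=0$ and $d_{\alpha(n)}=0$ correctly handle the edge cases $i_1=n$ and $\alpha(1)=n$, so that no maximal path is double-counted or omitted in either maximum. One should also verify that the start set $\{\alpha(1)\}\cup\{\text{isolated points}\}$ is nonempty (it always contains $\alpha(1)$), so that the maxima are well defined.
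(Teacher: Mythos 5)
Your argument is correct and follows essentially the same route as the paper: identify paths with full relations as $\alpha\zeta$-trajectories, show the non-extendable left ends are exactly the isolated points and $\alpha(1)$ (via $i_0=\zeta^{-1}\alpha(i_1)=n \Leftrightarrow i_1=\alpha(1)$), and conclude by Lemma \ref{lem-gldim}. The only difference is that you make explicit, via $(\zeta\alpha)^k(\alpha(x))=\alpha\bigl((\alpha\zeta)^k(x)\bigr)$ and hence $g_j=d_{\alpha(j)}$, the second equality that the paper dispatches with ``can be proved similarly.''
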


\begin{proof}
We will show the first equality in detail and the second one can be proved similarly. See also the proof of $(2) \Leftrightarrow (3)$ in Theorem \ref{thm-main}. 

Let $A$ be a gentle algebra with one maximal path and $\alpha \in \frak{D}_n$ the associated quasi-diagram. As in the proof of Theorem \ref{thm-main}, for $i=1,\cdots,n-1$, let $a_i$ be the arrow in $\mathbb{A}_n$ quiver $Q$ starting at the vertex $i$ and ending at the vertex $i+1$. Recall that a path $a_{i_1}a_{i_2}\cdots a_{i_r}$ has full relations in $A$ if and only if $\alpha\zeta (i_u)= i_{u+1}$ for $u=1, \cdots, r-1$, and $i_2, \cdots, i_r$ are non-isolated points of $\alpha$. Let $a_{i_1}a_{i_2}\cdots a_{i_r}$ be a maximal  path with full relations in $A$. Then $i_1$ is either an isolated point of $\alpha$ or equals $\alpha(1)$. Otherwise we have $i_0=\zeta^{-1}\alpha(i_1)\ne n$ and $a_{i_0}a_{i_1}\cdots a_{i_r}$ is a path with full relations in $A$, a contradiction. Similarly, $\alpha\zeta(i_r) = n$ or an isolated point. Now the desired equality follows from Lemma \ref{lem-gldim}.
\end{proof}

\begin{example}
Consider the gentle algebra $A$ defined by the regular chord diagram $\alpha = (13)(24) \in \frak{D}_4$ in Example \ref{exm-1}. We have $d_1 = 3$ and there is no isolated point, so that $\gldim A = 3$. 
\end{example}

The following result mainly follows from Proposition \ref{prop-gldim}. It tells us when the Koszul dual $A^!$ has only one maximal path, or  equivalently, when the Koszul dual quasi-diagram is well defined (Corollary \ref{cor-kos}).  

\begin{theorem} \label{thm-kosdual}
Let $A$ be a gentle algebra with one maximal path, and $\alpha \in \frak{D}_n$ the associated quasi-diagram. Then the following are equivalent.
\begin{enumerate}
	\item The Koszul dual quasi-diagram $\alpha^!$ of $\alpha$ exists.
	\item The global dimension of $A$ is $n-1$.
	\item The quasi-diagram $\alpha$ is one of the following types:
		\begin{enumerate}
		\item [Type A:]$\alpha$ is a maximal chord diagram;
		\item [Type B:]$\alpha$ is a maximal quasi-diagram with isolated points $1$ or $n$, or both;
		\item [Type C:]$(1,n)$ is a chord of $\alpha$, and $\alpha$ has exactly one isolated point and exactly two faces.
	\end{enumerate}
\end{enumerate}
Moreover, the Koszul dual $\alpha^!$ has the same type as $\alpha$ if exists. 
\end{theorem}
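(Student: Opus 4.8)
The plan is to first dispose of the equivalence $(1)\Leftrightarrow(2)$, which is essentially free: by Remark \ref{rem-kosgldim} the Koszul dual quasi-diagram $\alpha^!$ exists precisely when $A^!$ has a single maximal path, and Corollary \ref{cor-kos} identifies this condition with $\gldim A=n-1$. Thus the whole content lies in the equivalence $(2)\Leftrightarrow(3)$ together with the final type-preservation claim.

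For $(2)\Leftrightarrow(3)$ I would work entirely with the combinatorial model from the proof of Theorem \ref{thm-main}. Writing $\sigma=\alpha\zeta$, recall that a path with full relations in $A$ is a sequence $a_{i_1}\cdots a_{i_r}$ with $i_{u+1}=\sigma(i_u)$ and $i_2,\dots,i_r$ non-isolated, and that $\gldim A$ is the maximal length of such a path (Lemma \ref{lem-gldim}). Since $Q$ has exactly $n-1$ arrows and a repetition of an arrow would produce a full-relation cycle and hence infinite global dimension, $\gldim A=n-1$ holds iff there is a full-relation path visiting every arrow exactly once; such a path is a single $\sigma$-trajectory covering $\{1,\dots,n-1\}$, so $\{1,\dots,n-1\}$ lies in one $\sigma$-orbit. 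I would then split on the number of faces. If $\alpha$ is maximal, $\sigma$ is an $n$-cycle and the covering path is forced to be the arc obtained by deleting $n$, running from $i_1=\alpha(1)$ to $\zeta^{-1}\alpha(n)=\sigma^{-1}(n)$; the requirement that its interior be isolated-point-free then confines the isolated points of $\alpha$ to $\{1,n\}$ (using that an isolated $\alpha(1)$ forces $\alpha(1)=1$), giving Type A when there are none and Type B otherwise. If $\alpha$ is not maximal, the same covering forces exactly two $\sigma$-orbits $\{1,\dots,n-1\}$ and $\{n\}$, whence $\sigma(n)=\alpha(1)=n$, i.e.\ $(1,n)$ is a chord; the start of the path must then be an isolated point (as $\alpha(1)=n$ is not an arrow index), and interior-freeness forces it to be the unique one, which is exactly Type C. The converse is routine: in each type I would exhibit the covering path directly (the deleted-$n$ arc in Types A, B; a full loop of the $(n-1)$-cycle starting at the isolated point in Type C).

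For the final assertion I would separate the three types by two numerical invariants, the number $s$ of isolated points and the number $F$ of faces (writing $c$ for the number of chords): Type A is the unique one with $s=0$, Type C the unique one with $F=2$, and Type B is characterized by $s\ge 1$, $F=1$. The invariant $s$ is preserved for a cheap reason: the vertices of $Q_{A^!}=Q_A^{\mathrm{op}}$ are literally those of $Q_A$, so $\alpha$ and $\alpha^!$ have the same number of orbits $c+s$; combined with $2c+s=n=2c(\alpha^!)+s(\alpha^!)$ this forces $c(\alpha^!)=c$ and hence $s(\alpha^!)=s$. This already settles $A\leftrightarrow A$ and, since Type B may have $s=2$ while Type C always has $s=1$, it separates those cases too.

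The remaining and genuinely harder point is to distinguish Type B with a single isolated point from Type C, equivalently to show that maximality (the condition $F=1$, or the absence of the chord $(1,n)$) is preserved under Koszul duality. Here I would make $\alpha^!$ explicit: the unique maximal path of $A^!$ is the reverse of the length-$(n-1)$ full-relation path $a_{i_1}\cdots a_{i_{n-1}}$ of $A$, so its vertex sequence $v_1,\dots,v_n$ is the reversal of the vertex sequence $[i_1]_\alpha,\dots,[i_{n-1}]_\alpha,[i_{n-1}+1]_\alpha$ of that full-relation path, and $\alpha^!$ is read off as the coincidence pattern of the $v_k$. With this description the condition $\alpha^!(1)=n$ becomes $v_1=v_n$, i.e.\ $[i_{n-1}+1]_\alpha=[i_1]_\alpha$; in Type C the full-relation path both starts and stops at the unique isolated point $p$, so both sides equal $\{p\}$ and $\alpha^!(1)=n$, and invoking $(\alpha^!)^!\cong A$ yields the reverse implication. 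I expect this explicit identification of $\alpha^!$ — in particular the verification that the reversed full-relation path is indeed the maximal path of $A^!$, and the careful bookkeeping of which vertices coincide — to be the main obstacle; everything else reduces to the orbit combinatorics of $\sigma=\alpha\zeta$ already developed for Theorem \ref{thm-main} and Proposition \ref{prop-gldim}.
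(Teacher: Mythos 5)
Your proposal is correct in outline and, for the equivalence of (1), (2), (3), follows essentially the same combinatorial route as the paper: the paper runs the case analysis through the quantities $d_i$ of Proposition \ref{prop-gldim} attached to $\zeta\alpha$-orbits, whereas you work directly with $\alpha\zeta$-trajectories, and these are conjugate bookkeepings of the same argument. Where you genuinely diverge is the final type-preservation claim. The paper argues locally: an isolated endpoint $1$ gives a vertex of $Q_A$ with one outgoing and no incoming arrow, which dualizes to a vertex of $Q_{A^!}$ of the opposite shape and hence an isolated point $n$ of $\alpha^!$, and similarly for the chord $(1,n)$ in Type C. You instead (a) observe that the number of isolated points is a Koszul-duality invariant because $Q_{A^!}$ has the same vertex count $c+s$ as $Q_A$ while $2c+s=n$, which immediately handles Type A and the two-isolated-point subcase of Type B, and (b) identify $\alpha^!$ explicitly as the coincidence pattern of the reversed vertex sequence of the length-$(n-1)$ full-relation path, deduce that Type C forces $\alpha^!(1)=n$, and dispose of the remaining ambiguous case (Type B with one isolated point) by the involutivity $(\alpha^!)^!=\alpha$. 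This is a clean and arguably more conceptual alternative; the counting step and the use of involutivity in particular are not in the paper.

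One step is stated incorrectly, although it is repairable in a line. The asserted equivalence ``$\gldim A=n-1$ iff there is a full-relation path visiting every arrow exactly once'' fails in the backward direction: for $\alpha=(16)(24)(35)\in\frak{D}_6$ one has $\alpha\zeta=(14325)(6)$, so $a_1a_4a_3a_2a_5$ is a full-relation path using each of the five arrows exactly once, yet it closes up into a full-relation cycle (since $\alpha\zeta(5)=1$ and $1$ is not isolated) and $\gldim A=\infty$. The forward direction, which is all you use for $(2)\Rightarrow(3)$, is fine, because there a \emph{maximal} full-relation path of length $n-1$ exists and cannot repeat an arrow. But in the converse $(3)\Rightarrow(2)$, exhibiting a covering path only gives $\gldim A\ge n-1$; you must additionally note that in each of Types A, B, C the quasi-diagram is regular (Theorem \ref{thm-main}), equivalently that the covering trajectory is broken at an isolated point or at the index $n$ and therefore supports no full-relation cycle, in order to conclude equality.
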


\begin{proof}
	{ $(1) \Leftrightarrow (2)$} follows from Corollary \ref{cor-kos} and it suffices to prove  $(2) \Leftrightarrow (3)$.
	
	{ $(3) \Rightarrow (2)$.} If (3) holds,  then we can check that $\gldim A=n-1$ case by case by applying Proposition \ref{prop-gldim}. 

	$(2) \Rightarrow (3)$. If (2) holds, then by Proposition \ref{prop-gldim}, we know that $\alpha$ has at most two faces. Otherwise, we have $d_1,d_i < n-1$ for any isolated point $i$.  There are two cases.

  Case 1:  $\alpha$ is maximal, i.e., $\alpha$ has only one face. We may write 
$
\zeta \alpha  = (1, \dots, \alpha(n))
$
since $\zeta\alpha(\alpha(n)) = \zeta(n) = 1$. Then $(\zeta\alpha)^{n-1}(1)= \alpha(n)$, and hence $d_1 \leq n-1$. For any $i\notin \{1, \alpha(n)\}$, we have $i=(\zeta\alpha)^{\sigma_i}(1)$ for some $1\le \sigma_i\le n-2$, then $(\zeta\alpha)^{n-1-\sigma_i}(i)=\alpha(n)$ and $d_i\le n-\sigma_i-1 < n-1$. Moreover, $i$ is not an isolated point of $\alpha$, otherwise, $d_1\le \sigma_i\le n-2$, and by Proposition \ref{prop-gldim} we have $\gldim A\le n-2$, which leads to a contradiction.

Now we have shown that in case $\alpha$ is maximal, $d_1 = n-1$ and any isolated point of $\alpha$ is either $1$ or $\alpha(n)$, that is, $\alpha$ is either of Type $A$ or Type $B$.

Case 2: $\alpha$ has exactly two faces. Write $\zeta\alpha = w_1 w_2$, where $w_1, w_2$ are faces of $\alpha$. We may assume $1 \in w_1$ without loss of generality. By Theorem \ref{thm-main}, there is an isolated point $i$ such that $i \neq 1$ and $i \in w_2$. Let $l_1,l_2$ be the lengths of $w_1, w_2$ respectively. Then
$$
d_1 \leq l_1 - 1 = n- l_2 - 1 \leq n-2, 
$$
where the first inequality follows from $\alpha(n) \in w_1$ (Lemma \ref{lem-1}), and 
$$
d_i \leq l_2 = n- l_1 \leq n-1.
$$  
By similar discussion as in case (i), we have $\gldim A = n-1$ if and only if $d_i = n-1 = l_2$, if and only if there is no isolated point $j \in w_2$ other than $i$. On the other hand, since $l_1 = n-l_2 = 1$ and $1 \in w_1$, we have $w_1 = (1)$, and it follows that $(1,n)$ is a chord of $\alpha$. 
Therefore $A$ is of Type C. 

We are left to prove the last statement. Assume that the Koszul dual diagram $\alpha^!$ of $\alpha$ exists. It suffices to prove the statement for $\alpha$ of Type B and Type C, and then the case of Type A holds automatically. 

First assume that $\alpha$ is of Type B. It suffices to show that $1$ is an isolated point of $\alpha$ if and only if $n$ is an isolated point of $\alpha^!$. Let $\alpha$ be a maximal quasi-diagram with isolated point $1$.  Then the isolated point $1$ corresponds to a vertex $v$ in the quiver $Q_{A}$, such that there is only one arrow $a$ with $s(a) = v$, and there is no arrow $b$ with $t(b) = v$. By Proposition \ref{prop-kos}, it implies that there is a vertex $v'$ in $Q_{A^!}$, such that there is only one arrow $a'$ with $t(a') = v'$, and there is no arrow $b'$ with  $s(b') = v'$. The existence of such an vertex $v'$ implies that $n$ is an isolated point of $\alpha^!$.

Now assume that $\alpha$ is of Type C. Since $(1,n)$ is a chord of $\alpha$. There is a path with relations in $A$ as in the following 
$$
\begin{tikzpicture}
\draw (0.4,0) node[right] {$\cdots$};
\draw (2.25,0) circle[radius= 0.17 em];
\draw (3.35,0) node[right] {$\cdots$.};

\draw (2.25,0) node[below] {$v$};

\draw[->] (1.1, 0) --  (2.1,0) ; 
\draw[->] (1.1, 0) --  (2.1,0) ; 
\draw[->] (2.4, 0) --  (3.4,0) ; 

\draw[densely dotted] (1.945, 0) arc (180:0:0.3);
\end{tikzpicture}
$$
Where $v$ corresponds to the vertex in $Q_A$ by gluing $1$ and $n$, so that there is only one arrow $a$ with $t(a) = v$ and one arrow $b$ with $s(b) = v$. By Proposition \ref{prop-kos}, there is a path in $Q_{A^!}$ as in the following
$$
\begin{tikzpicture}
\draw (0.4,0) node[right] {$\cdots$};
\draw (2.25,0) circle[radius= 0.17 em];
\draw (3.35,0) node[right] {$\cdots$};

\draw (2.25,0) node[below] {$v'$};

\draw[<-] (1.1, 0) --  (2.1,0) ; 
\draw[<-] (1.1, 0) --  (2.1,0) ; 
\draw[<-] (2.4, 0) --  (3.4,0) ; 

\end{tikzpicture}
$$
so that there is only one arrow $a'$ with $s(a') = v'$ and  one arrow $b'$ with $t(b') = v'$.
Thus there is an isolated point $i$ of $\alpha^!$ corresponds to the vertex $v'$, and clearly that $i \neq 1,n$. Thus $\alpha^!$ is of Type C. 
\end{proof}

\begin{example}
Let $A_1,A_2, A_3$ be the gentle algebras defined by the quasi-diagrams 
$$
\alpha_1 = (13)(28)(46)(57) \in \frak{S}_8, \ \alpha_2 = (13)(24) \in \frak{S}_5, \ \alpha_3 = (17)(24)(35)  \in \frak{S}_7
$$
respectively.
Then we have
$$
\zeta _8\alpha_1=(14765832), \ \zeta_5 \alpha_2 = (14325), \ \zeta _7\alpha_3 =(1)(254367).
$$
By Theorem \ref{thm-kosdual}, we have $\gldim A_1=7,\gldim A_2=4, \gldim A_3 = 6$.
Their Koszul dual quasi-diagrams are
$$
(\alpha_1)^! = (13)(28)(46)(57) = \alpha_1, \ (\alpha_2)^! = (24)(35), \ (\alpha_3)^! = (17)(35)(46).
$$
Moreover, $\alpha_1, \alpha_2, \alpha_3$ are of Type A, B, C respectively. 
\end{example}

\section{Dihedral Group Action} \label{sec-dih}

For a quasi-diagram $\alpha \in \frak{D}_{n}$, we can image $\alpha$ describes a drawing on an $n$-gon $P_n$ with its sides labeled by $1,2, \dots, n$ consecutively around its boundary: 
draw a chord between sides $i$ and $j$ for every chord $(i,j)$ of $\alpha$.
  
\begin{example} \label{exm-draw}
Let $\alpha = (13)(24), \alpha' = (14) \in \frak{D}_4$. We give the corresponding drawings in Figure \ref{P2}.
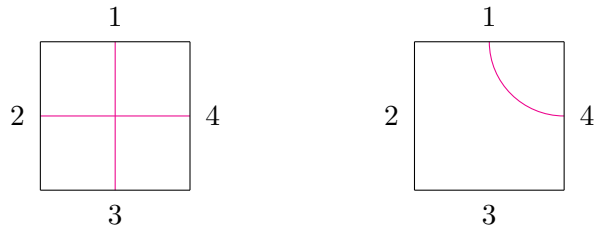
\begin{figure}[h]
  \centering
 \begin{tikzpicture}[x=20, y=20, baseline=(current bounding box)]
  \draw[magenta,-] (-6.4, 0) --  (-3.6,0) ; 
 \draw[magenta,-] (-5, 1.4) --  (-5,-1.4) ; 
 \draw[-] (-6.4, -1.4) --  (-6.4,1.4) ; 
 \draw[-] (-6.4,1.4) -- (-3.6, 1.4) ; 
 \draw[-] (-3.6, 1.4) -- (-3.6, -1.4) ; 
 \draw[-] (-3.6, -1.4) -- (-6.4, -1.4) ; 
 \draw (-5,1.5) node[above] {$1$};
 \draw (-5,-1.5) node[below] {$3$};
 \draw (-6.5,0) node[left] {$2$};
 \draw (-3.5,0) node[right] {$4$};

 \draw[magenta] (2,1.4) arc(180:270:1.4);
 \draw[-] (0.6, -1.4) --  (0.6,1.4) ; 
 \draw[-] (0.6,1.4) -- (3.4, 1.4) ; 
 \draw[-] (3.4, 1.4) -- (3.4, -1.4) ; 
 \draw[-] (3.4, -1.4) -- (0.6, -1.4) ;
 \draw (2,1.5) node[above] {$1$};
 \draw (2,-1.5) node[below] {$3$};
 \draw (0.5,0) node[left] {$2$};
 \draw (3.5,0) node[right] {$4$};
\end{tikzpicture}
\caption{Drawings of $\alpha$ (left), and $\alpha '$ (right)} 
\label{P2}
\end{figure}
\end{example}

Thus it is natural to consider the dihedral group $D_{n}$ (viewed as a subgroup of $\frak{S}_n$)  acts on the set of quasi-diagrams by conjugation:
\begin{align*}
D_{n} \times \frak{D}_n &\to \frak{D}_n,\\
(g, \alpha)&\mapsto g\cdot \alpha = g \alpha g^{-1}.
\end{align*}
We aim to answer the Question 2 mentioned  in the introduction.

Firstly, we consider the reflection $\gamma \in D_n$ which interchanges the sides $1$ and $n$, say
\[
\gamma= \begin{cases}
	(1,2m+1)(2,2m)\cdots (m, m+2)& \text{if}\  n=2m+1;\\
	(1,2m)(2, 2m-1)\cdots (m,m+1)& \text{if}\  n=2m.
\end{cases}
\]

\begin{proposition} \label{prop-ref}
Let $\alpha \in \frak{D}_n$ be a quasi-diagram. Then $\alpha$ is regular if and only if the quasi-diagram $\gamma \cdot \alpha$ is regular. 
\end{proposition}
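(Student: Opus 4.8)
The plan is to exploit the defining relation of the dihedral group — that $\gamma$ conjugates the cycle $\zeta$ to its inverse — and then to transport the orbit data of $\zeta\beta$, where $\beta := \gamma\cdot\alpha = \gamma\alpha\gamma^{-1}$, onto the orbit data of $\alpha\zeta$. Since $\beta$ is again an involution (a conjugate of one), hence a quasi-diagram, and since regularity is characterized by Theorem \ref{thm-main} through the two equivalent vanishing conditions $\#(\mathscr{A}) = 0$ and $\#(\mathscr{B}) = 0$, it will suffice to match condition (2) for $\beta$ with condition (3) for $\alpha$.

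First I would record the elementary facts about $\gamma$. As a permutation it is the involution $i \mapsto n+1-i$; in particular $\gamma = \gamma^{-1}$, with $\gamma(1) = n$ and $\gamma(n) = 1$. A direct check at the two boundary indices $1$ and $n$, where the wraparound of $\zeta$ occurs, yields the key relation $\gamma\zeta\gamma^{-1} = \zeta^{-1}$ (equivalently $\gamma\zeta\gamma = \zeta^{-1}$, as $\gamma$ is an involution). Consequently, using $\gamma\beta\gamma^{-1} = \alpha$ and $\alpha^{-1} = \alpha$, I obtain
\[
\gamma(\zeta\beta)\gamma^{-1} = (\gamma\zeta\gamma^{-1})(\gamma\beta\gamma^{-1}) = \zeta^{-1}\alpha = (\alpha\zeta)^{-1}.
\]

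Next I would translate orbits and distinguished points. Conjugation by $\gamma$ carries orbits of $\zeta\beta$ bijectively to orbits of $(\alpha\zeta)^{-1}$, and a permutation has the same orbits as its inverse, so $\omega \mapsto \gamma(\omega)$ is a bijection between the orbits of $\zeta\beta$ and the orbits of $\alpha\zeta$. Under this bijection I track the two kinds of distinguished points: on one hand $j$ is an isolated point of $\beta$ if and only if $\gamma(j)$ is an isolated point of $\alpha$, which is immediate from $\beta = \gamma\alpha\gamma^{-1}$; on the other hand $\gamma(1) = n$. Hence an orbit $\omega$ of $\zeta\beta$ contains an isolated point of $\beta$ or the element $1$ if and only if $\gamma(\omega)$ contains an isolated point of $\alpha$ or the element $n$. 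In the notation of the theorem this says precisely $\#(\mathscr{A}_{\beta}) = \#(\mathscr{B}_{\alpha})$.

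Finally, invoking Theorem \ref{thm-main}, the quasi-diagram $\beta = \gamma\cdot\alpha$ is regular if and only if $\#(\mathscr{A}_{\beta}) = 0$, while $\alpha$ is regular if and only if $\#(\mathscr{B}_{\alpha}) = 0$; the equality just established then closes the argument. The only point demanding genuine care — the main obstacle — is the verification of $\gamma\zeta\gamma = \zeta^{-1}$ at the boundary indices together with the bookkeeping that guarantees the $1$-versus-$n$ roles and the $\zeta\alpha$-versus-$\alpha\zeta$ roles are interchanged consistently; here Lemma \ref{lem-1} is the conceptual reason the pairing of $1$ with $n$ is the correct one. Everything else is routine transport of structure under conjugation.
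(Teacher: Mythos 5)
Your proof is correct and follows essentially the same route as the paper: both hinge on the relation $\gamma\zeta\gamma=\zeta^{-1}$, use it to conjugate $\zeta(\gamma\cdot\alpha)$ into $(\alpha\zeta)^{-1}$, and then transport orbits, isolated points, and the distinguished elements $1$ and $n$ under $\gamma$ to identify $\mathscr{A}_{\gamma\cdot\alpha}$ with $\mathscr{B}_{\alpha}$ before invoking Theorem \ref{thm-main}. The only cosmetic difference is the direction of the conjugation bookkeeping; your aside about Lemma \ref{lem-1} is not actually needed, and the paper does not use it here either.
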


\begin{proof}
	It is direct to check that $\gamma \zeta^{-1}\gamma=\zeta$. Then $$(\zeta(\gamma\cdot\alpha))^{-1} = (\zeta\gamma\alpha\gamma)^{-1} = \gamma\alpha\gamma\zeta^{-1}=\gamma(\alpha\gamma\zeta^{-1}\gamma)\gamma^{-1}=\gamma(\alpha\zeta)\gamma^{-1}.$$

Clearly, a subset $\omega$ is an orbit of $\alpha\zeta$ if and only if $\gamma(\omega)=\{\gamma(i)\mid i\in \omega\}$ is an orbit of $(\zeta(\gamma\cdot\alpha))^{-1}$, if and only if $\gamma(\omega)$ is an orbit of $\zeta(\gamma\cdot\alpha)$. Moreover, $n\in \omega$ if and only if $1\in \gamma(\omega)$, and $i\in \omega$ is an isolated point of $\alpha$ if and only if $\gamma(i)$ is an isolated point of $\gamma\alpha\gamma=\gamma\cdot \alpha$. Thus  $\omega\in \mathscr{B}_\alpha$ if and only if $\gamma(\omega)\in \mathscr{A}_{\gamma\cdot\alpha}$,
and the assertion follows from Theorem \ref{thm-main}.
\end{proof}

\begin{remark}
Let $\alpha$ be a quasi-diagram. 
Let $A$ and $A'$ be gentle algebras associated to $\alpha$ and $\gamma \cdot \alpha$ respectively. Then $A'$ is isomorphic to $A^{op}$, the opposite algebra of $A$.
\end{remark}

For a quasi-diagram $\alpha \in \frak{D}_n$ and an integer $l$. We call $\zeta^l \cdot \alpha = \zeta^{l} \alpha \zeta^{-l}$ the {\it $l$-th rotation} of $\alpha$. A quasi-diagram is called {\it rotatably regular} if its $l$-th rotation is regular for any integer $l$. 
Set
$$
\frak{R}_n : = \{ \text{rotatably regular quasi-diagrams} \in \frak{D}_n\}.
$$ 
Since $D_n$ is generated by $\gamma, \zeta$, and $\gamma \zeta = \zeta^{-1}\gamma$, Proposition \ref{prop-ref} implies that conjugate action of the dihedral group  on $\mathfrak S_n$ restricts to an action of $D_n$ on $\frak{R}_n$:
\begin{align*}
D_{n} \times \frak{R}_n \to \frak{R}_n,\quad
(g, \alpha)\mapsto g\cdot \alpha = g \alpha g^{-1}.
\end{align*}

\begin{lemma} \label{lem-rot}
Let $\alpha \in \frak{D}_n$ be a quasi-diagram, and $\beta_{l}$ the $l$-th rotation of $\alpha$ where $l \in \mathbb{Z}$. Then
\begin{itemize}
\item[(1)] $i$ is an isolated point of $\alpha$ if and only if $\zeta^l(i)$ is an isolated point of $\beta_l$;
\item[(2)] $w$ is a face of $\alpha$ if and only if $\zeta^l w \zeta^{-l}$ is a face of $\beta_l$. 
\end{itemize}
\end{lemma}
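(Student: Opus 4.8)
The plan is to derive both parts from one elementary conjugation identity together with the standard fact that conjugating a permutation merely relabels the entries of its disjoint-cycle decomposition. Throughout I would work directly from the definitions: an isolated point is a fixed point of $\alpha$, and a face is a cycle (equivalently an orbit) of $\zeta\alpha$, with $\beta_l = \zeta^l\alpha\zeta^{-l}$.

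For part (1), I would compute directly. Since $\beta_l\zeta^l = \zeta^l\alpha$, we get $\beta_l(\zeta^l(i)) = \zeta^l\alpha(i)$, so $\zeta^l(i)$ is fixed by $\beta_l$ exactly when $\zeta^l\alpha(i) = \zeta^l(i)$, which (as $\zeta^l$ is a bijection) holds iff $\alpha(i)=i$. This gives the claimed equivalence with no further work.

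For part (2), the key observation is the identity
$$
\zeta\beta_l \;=\; \zeta\,\zeta^l\alpha\zeta^{-l} \;=\; \zeta^l(\zeta\alpha)\zeta^{-l},
$$
using that $\zeta$ commutes with $\zeta^l$; thus $\zeta\beta_l$ is the conjugate of $\zeta\alpha$ by $\zeta^l$. I would then invoke the general principle that for any permutation $\sigma$ and any $g\in\frak{S}_n$, conjugation sends $\sigma = w_1\cdots w_r$ to $g\sigma g^{-1} = (gw_1g^{-1})\cdots(gw_rg^{-1})$, where $g(i_1\cdots i_s)g^{-1} = (g(i_1)\cdots g(i_s))$. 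Applying this with $\sigma = \zeta\alpha$ and $g=\zeta^l$ shows that $w$ appears as a cycle of $\zeta\alpha$ if and only if $\zeta^lw\zeta^{-l}$ appears as a cycle of $\zeta\beta_l$, that is, $w$ is a face of $\alpha$ iff $\zeta^lw\zeta^{-l}$ is a face of $\beta_l$. The statement transfers verbatim to the orbit reading of ``face'', since the support of $\zeta^lw\zeta^{-l}$ is $\zeta^l(\operatorname{supp} w)$.

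The argument is routine and I do not anticipate a genuine obstacle; the only point demanding care is the bookkeeping of trivial cycles. Because Definition \ref{defn-dia}(4) records the $1$-cycles $(i)$ individually as part of a \emph{complete} product, I would note explicitly that conjugation by $\zeta^l$ carries the $1$-cycle $(i)$ to the $1$-cycle $(\zeta^l(i))$, so the complete product of $\zeta\alpha$ is carried bijectively onto that of $\zeta\beta_l$ and no faces are merged or lost. This is the one place where a careless passage to the bare permutation $\zeta\alpha$, rather than its complete cycle decomposition, could introduce an error.
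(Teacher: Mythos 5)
Your proposal is correct and follows essentially the same route as the paper: part (1) via the computation $\beta_l\zeta^l(i)=\zeta^l\alpha(i)$, and part (2) via the identity $\zeta\beta_l=\zeta^l(\zeta\alpha)\zeta^{-l}$ combined with the standard behaviour of cycle decompositions under conjugation. Your extra remark about tracking the $1$-cycles in the complete product is a sensible bit of care that the paper leaves implicit.
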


\begin{proof}
For (1). This follows from 
$\alpha(i) = i$ if and only if $ \zeta^l \alpha(i) = \zeta^l(i)$, and 
$
\beta_l \zeta^l (i)= \zeta^l \alpha \zeta^{-l} \zeta^l (i) = \zeta^l \alpha(i).
$

For (2). This follows from $\zeta\beta_l= \zeta(\zeta^l\alpha\zeta^{-l}) =\zeta^{l} (\zeta\alpha) \zeta^{-l}$. 
\end{proof}

We characterize the rotatably regular quasi-diagrams in the following. 

\begin{theorem} \label{thm-rot}
Let $\alpha \in \frak{D}_n$ be a  quasi-diagram. Then $\alpha$ is rotatably regular if and only if either $\alpha$ is maximal, or any face of $\alpha$ contains at least one isolated point.
\end{theorem}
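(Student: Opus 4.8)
The plan is to reduce the biconditional to a single rotation-invariance observation together with a check at one well-chosen rotation. The crucial input is Lemma \ref{lem-rot}: writing $\beta_l = \zeta^l\cdot\alpha$, the map $i \mapsto \zeta^l(i)$ is a bijection from the isolated points of $\alpha$ onto those of $\beta_l$, and $w \mapsto \zeta^l w \zeta^{-l}$ is a bijection from the faces of $\alpha$ onto those of $\beta_l$; moreover a face $w$ of $\alpha$ contains an isolated point if and only if the corresponding face of $\beta_l$ does. In particular the number of faces is preserved (so $\alpha$ is maximal iff every $\beta_l$ is), and the property ``every face contains an isolated point'' is preserved. Hence the right-hand condition of the theorem — call it (RHS) — is invariant under $\alpha \mapsto \beta_l$, and I will exploit this in both directions.

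For the direction $(\Leftarrow)$, assume (RHS). Since (RHS) is rotation-invariant, every $\beta_l$ satisfies it too, so it suffices to show that any quasi-diagram $\beta$ satisfying (RHS) is regular, i.e. $\#(\mathscr{A}_\beta) = 0$. If $\beta$ is maximal, this is the already recorded fact that maximal quasi-diagrams are regular. Otherwise every face of $\beta$ contains an isolated point, so no orbit of $\zeta\beta$ can lie in $\mathscr{A}_\beta$ (whose members, by definition, contain no isolated point). Thus $\#(\mathscr{A}_\beta) = 0$ and $\beta$ is regular by Theorem \ref{thm-main}.

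For the direction $(\Rightarrow)$, I would argue by contraposition. Suppose (RHS) fails, so $\alpha$ is not maximal and some face $w$ of $\alpha$ contains no isolated point. Non-maximality forces at least two faces, so the support of $w$ is a \emph{proper} subset of $\{1,\dots,n\}$; fix $j \notin w$. Because $\zeta$ is a full $n$-cycle, the assignment $l \mapsto \zeta^{-l}(1)$ is surjective onto $\{1,\dots,n\}$, so I can choose $l$ with $\zeta^{-l}(1) = j$, equivalently $1 \notin \zeta^l(w)$. By Lemma \ref{lem-rot} the face of $\beta_l$ with support $\zeta^l(w)$ contains no isolated point of $\beta_l$, and by construction it avoids $1$; hence it belongs to $\mathscr{A}_{\beta_l}$, giving $\#(\mathscr{A}_{\beta_l}) \geq 1$. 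Thus $\beta_l$ is not regular, so $\alpha$ is not rotatably regular.

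The only genuinely delicate point is the rotation-choosing step in $(\Rightarrow)$: I must guarantee that the isolated-point-free face can be rotated off the distinguished element $1$, which is precisely where non-maximality (furnishing a proper face) and the transitivity of the $\zeta$-action are both used. Were $\alpha$ maximal, its single face would cover all of $\{1,\dots,n\}$ and no such rotation would exist, which is exactly why the maximal case sits on the right-hand side. Everything else is bookkeeping with the bijections of Lemma \ref{lem-rot} and the characterization of regularity through $\mathscr{A}$ in Theorem \ref{thm-main}.
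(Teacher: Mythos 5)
Your proof is correct and follows essentially the same route as the paper: both directions rest on the rotation-invariance furnished by Lemma \ref{lem-rot}, the characterization of regularity via $\mathscr{A}_\alpha$ in Theorem \ref{thm-main}, and the transitivity of the $\zeta$-action to force an isolated-point-free face to track the distinguished element $1$ under every rotation. The only difference is organizational — you phrase the forward direction as a contrapositive (exhibiting a rotation that fails to be regular), whereas the paper argues directly that rotatable regularity forces such a face to exhaust $\{1,\dots,n\}$ — but the underlying argument is identical.
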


\begin{proof}
Let $\alpha$ be a quasi-diagram, and $\beta_l = \zeta^l \cdot \alpha = \zeta^{l} \alpha \zeta^{-l}$ the $l$-th rotation of $\alpha$ where $l \in \mathbb{Z}$. 

First we prove the sufficiency. If $\alpha$ is maximal, say $\alpha$ has only one face, then any $\beta_l$ is maximal by Lemma \ref{lem-rot} (2), and hence regular; if any face of $\alpha$ contains at least one isolated point, then so is $\beta_l$ for any $l$ by Lemma \ref{lem-rot}, and hence $\beta_l$ is regular for any $l$.

Now assume that $\alpha$ is rotatably regular. To prove the necessity, it suffices to show that if there is a face $\omega$ of $\alpha$ such that $1\in\omega$ and $\omega$ contains no isolated points, then $\alpha$ is maximal.

By Lemma \ref{lem-rot}, $\zeta^l\omega\zeta^{-l}$ is a face of $\beta_l$ and contains no isolated points for any $l$. Since  $\alpha$ is a rotatably regular, $\beta_l$ is regular, and by Theorem  \ref{thm-main} it forces that $1\in \zeta^l\omega\zeta^{-l}$, or equivalently, $\zeta^{-l}(1)=n+1-l\in\omega$ for all $l$. Hence $\omega$ contains all points $i \in \{1,2, \cdots, n \}$, which means $\alpha$ is maximal. 
\end{proof}

\begin{example}
The first nontrivial set of rotatably regular quasi-diagrams is 
$$
\frak{R}_4 = \{\id, (13), (24), (13)(24) \}.
$$
Where
$$
D_4 \cdot (13) = \{ (13), (24) \}=\ D_4 \cdot (24), \ D_4 \cdot (13)(24) = \{ (13)(24) \}.
$$
\end{example}

\section{Expansions and Contractions} \label{sec-ptext}

We may identify $\mathfrak S_n$ with the subgroup of $\mathfrak S_{n+1}$ which fixes the point $n+1$. In this sense, any quasi-diagram $\alpha \in \frak{D}_n$ can be viewed as a diagram in $\frak{D}_{n+1}$. Moreover, for any $1\le i\le n+1$, we define
$$
\iota_i (\alpha) := \vartheta_{i,n+1} \alpha (\vartheta_{i,n+1})^{-1} \in \frak{D}_{n+1},
$$
where $\vartheta_{i,n+1} = (i, i+1, \dots, n, n+1) \in \frak{S}_{n+1}$.
We call $\iota_i (\alpha)$ is the {\it $i$-expansion} of $\alpha$, or the expansion of $\alpha$ at the position $i$. 

\begin{example}
	Let $\alpha = (13)(24) \in \frak{D}_4$ be a regular quasi-diagram. Then we have
	$$
	\iota_1(\alpha) = (12345)(13)(24)(54321) = (24)(35) \in \frak{D}_5,
	$$
	and 
	$$
	\iota_2(\alpha) = (2345)(13)(24)(5432) = (14)(35) \in \frak{D}_5.
	$$
	We can check that both $\iota_1(\alpha)$ and $\iota_2(\alpha)$ are regular. 
\end{example}

\begin{example} \label{exm-ptext2}
	Let $\alpha = (12) \in \frak{D}_2$ be a quasi-diagram which is not regular. Then we have
	$$
	\iota_1(\alpha) = (123)(12)(321) = (23) \in \frak{D}_3,
	$$
	and 
	$$
	\iota_2(\alpha) = (23)(12)(32) = (13) \in \frak{D}_3.
	$$
	We can check that $\iota_1(\alpha)$ is not regular, but $\iota_2(\alpha)$ is regular. 
\end{example}

Let $\alpha \in \frak{D}_n$ be a quasi-diagram. We may write
$$
\zeta_n \alpha = w_1 \cdots w_t
$$
as a complete product of disjoint cycles, or in other words,  $w_s$'s are all faces of $\alpha$. 
Let $i \in \{1, \dots, n+1 \}$. 
We consider the faces of $\iota_i (\alpha) \in \frak{D}_{n+1}$.

\begin{lemma} \label{lem-facext}
	Keep notations as above. 
	\begin{itemize}
		\item[(1)] If $i = n+1$, then $\zeta_{n+1}\iota_i (\alpha)=w_1'\cdots w_t'$, where
		$$
		w'_s = \begin{cases}  w_{s} & \text{\quad if } 1 \notin w_{s}, \\
			(1,n+1) w_{s} & \text{\quad otherwise.} \end{cases}
		$$
		\item[(2)] If $i \neq n+1$, then $\zeta_{n+1}\iota_i (\alpha)=w_1'\cdots w_t'$, where
		$$
		w'_s = \begin{cases} \vartheta_{i,n+1} w_{s} (\vartheta_{i,n+1})^{-1} & \text{\quad if } i \notin w_{s}, \\
			\vartheta_{i+1,n+1} w_{s} (\vartheta_{i,n+1})^{-1} & \text{\quad otherwise.} \end{cases}
		$$
	\end{itemize}
\end{lemma}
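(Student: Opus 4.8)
The plan is to unwind the definition of $\iota_i(\alpha)$ and compute $\zeta_{n+1}\iota_i(\alpha)$ directly by conjugating the known factorization $\zeta_n\alpha = w_1\cdots w_t$. The key algebraic fact I would establish first is a clean formula relating $\zeta_{n+1}$ to $\zeta_n$ after conjugation by $\vartheta_{i,n+1}$. Concretely, I would compute $\vartheta_{i,n+1}^{-1}\,\zeta_{n+1}\,\vartheta_{i,n+1}$ and compare it to $\zeta_n$ (extended to fix $n+1$). Since $\zeta_{n+1}=(1,2,\dots,n+1)$ inserts $n+1$ into the big cycle right after $n$, while $\vartheta_{i,n+1}=(i,i+1,\dots,n+1)$ is itself a cycle supported on $\{i,\dots,n+1\}$, the conjugate $\zeta_{n+1}\iota_i(\alpha) = \zeta_{n+1}\vartheta_{i,n+1}\alpha\vartheta_{i,n+1}^{-1}$ should simplify to $\vartheta_{i,n+1}\,(\sigma\,\zeta_n\alpha)\,\vartheta_{i,n+1}^{-1}$ for a small correction $\sigma$ that moves the single new point. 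The heart of the matter is identifying $\sigma$ and tracking exactly how it interacts with the cycle containing $i$ (resp. the cycle containing $1$, in the case $i=n+1$).

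\textbf{Case $i=n+1$.} Here $\vartheta_{n+1,n+1}$ is the identity, so $\iota_{n+1}(\alpha)=\alpha$ as an element of $\frak{S}_{n+1}$ (recall $\alpha$ fixes $n+1$). Thus I only need to compare $\zeta_{n+1}\alpha$ with $\zeta_n\alpha$. Writing $\zeta_{n+1}=(n,n+1)\,\zeta_n$ as permutations of $\{1,\dots,n+1\}$ (both send $k\mapsto k+1$ for $k<n$; $\zeta_n$ sends $n\mapsto 1$ and fixes $n+1$, whereas $\zeta_{n+1}$ sends $n\mapsto n+1\mapsto 1$), I would insert the transposition and see how it acts on the faces $w_s$. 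Since $\alpha(n+1)=n+1$, the new point $n+1$ attaches to the cycle that currently contains $1$: precisely, the face $w_s$ with $1\in w_s$ gets $n+1$ spliced in just before $1$, which is exactly the statement $w'_s=(1,n+1)w_s$, while faces avoiding $1$ are unaffected. This gives part (1).

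\textbf{Case $i\neq n+1$.} I would conjugate the whole equation $\zeta_n\alpha=w_1\cdots w_t$ by $\vartheta_{i,n+1}$ and then correct for the mismatch between $\vartheta_{i,n+1}\zeta_n\vartheta_{i,n+1}^{-1}$ and $\zeta_{n+1}$. The expected relation is $\zeta_{n+1}=\vartheta_{i+1,n+1}\,\zeta_n\,\vartheta_{i,n+1}^{-1}$ (or an equivalent rearrangement), reflecting that expanding at position $i$ splits the arrow formerly entering vertex $i$ and routes the new point $n+1$ through it. Conjugating $w_s$ by $\vartheta_{i,n+1}$ relabels its points by $\vartheta_{i,n+1}$; the single face containing $i$ additionally picks up the discrepancy between $\vartheta_{i,n+1}$ and $\vartheta_{i+1,n+1}$ on that side, yielding the asymmetric formula with $\vartheta_{i+1,n+1}$ on the left and $\vartheta_{i,n+1}^{-1}$ on the right. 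For faces not containing $i$, the two versions of $\vartheta$ agree on the relevant support, so $w'_s=\vartheta_{i,n+1}w_s\vartheta_{i,n+1}^{-1}$, which is plain conjugation.

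\textbf{The main obstacle} I anticipate is pinning down the exact conjugation identity between $\zeta_{n+1}$ and $\zeta_n$ twisted by the two cycles $\vartheta_{i,n+1}$ and $\vartheta_{i+1,n+1}$, and then verifying that the correction is localized to the single face containing $i$ (resp. $1$). It is easy to introduce an off-by-one error in where $n+1$ gets inserted, so I would check the formula on the boundary indices $i$ and $i+1$ explicitly and confirm it against the worked computations in the preceding examples (e.g.\ $\iota_1$ and $\iota_2$ of $(13)(24)$) before declaring the disjoint-cycle bookkeeping complete.
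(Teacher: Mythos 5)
Your proposal follows essentially the same route as the paper: the paper rewrites $\zeta_{n+1}\vartheta_{i,n+1}$ as $(i+1,\dots,n+1,1)\zeta_{n+1}$, uses $\zeta_{n+1}=(1,n+1)\zeta_n$, and arrives at $(i,i+1)\,\vartheta_{i,n+1}w_1\cdots w_t(\vartheta_{i,n+1})^{-1}$, which is exactly the content of your identity $\zeta_{n+1}=\vartheta_{i+1,n+1}\,\zeta_n\,(\vartheta_{i,n+1})^{-1}$ (both are correct), after which the transposition $(i,i+1)$ is commuted past the conjugated faces avoiding $i,i+1$ and absorbed into the unique one containing $i+1$. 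The only slip is in case (1): under the paper's right-to-left composition the factorization is $\zeta_{n+1}=(1,n+1)\zeta_n=\zeta_n(n,n+1)$ rather than $(n,n+1)\zeta_n$, but the conclusion you draw from it, $w_s'=(1,n+1)w_s$ for the face containing $1$, is the correct one.
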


\begin{proof}
	Keep notations as mentioned before the lemma. The case $(1)$ is easy to check. We give a detailed proof of case $(2)$. 
	Assume $i \neq n+1$. We have 
	\begin{align*}
		\zeta_{n+1} \iota_i(\alpha) &= \zeta_{n+1} \vartheta_{i,n+1}\alpha (\vartheta_{i,n+1})^{-1} \\
		&= \zeta_{n+1}(i,i+1,\dots,n+1)\zeta_{n+1}^{-1}\zeta_{n+1}\alpha(\vartheta_{i,n+1})^{-1} \\
		&=(i+1,i+2,\dots,n+1,1)\zeta_{n+1}\alpha (\vartheta_{i,n+1})^{-1}\\
		&=(i+1,i+2,\dots,n+1,1)(1,n+1)\zeta_n \alpha (\vartheta_{i,n+1})^{-1}\\
		&=(i+1,i+2,\dots,n+1)\zeta_n \alpha (\vartheta_{i,n+1})^{-1} \\
		&=(i,i+1) (i, i+1, \dots,n+1) \zeta_n \alpha (\vartheta_{i,n+1})^{-1}\\
		&=(i,i+1) \vartheta_{i,n+1}w_1 \cdots w_t (\vartheta_{i,n+1})^{-1}. 
	\end{align*}
	Note there is only one $d \in \{1, \dots, t\}$ such that $i \in w_d$, so that 
	$$
	i+1 \in \vartheta_{i,n+1}w_d (\vartheta_{i,n+1})^{-1} =: \tilde{w}_d.
	$$
	Denote by $w'_s = \vartheta_{i,n+1}w_s (\vartheta_{i,n+1})^{-1}$ if $s \neq d$. Then we have
	\begin{align*}
		\zeta_{n+1} \iota_i(\alpha) &= (i,i+1) w'_1 \cdots w'_{d-1} \tilde{w}_d w'_{d+1} \cdots w'_t. 
	\end{align*}
	Since $w'_s$ does not contain $i$ nor $i+1$ for $s \neq d$, we have $(i,i+1) w'_s = w'_s (i,i+1)$. Thus
	\begin{align*}
		\zeta_{n+1} \iota_i(\alpha) &=  w'_1 \cdots w'_{d-1} (i,i+1) \tilde{w}_d w'_{d+1} \cdots w'_t. 
	\end{align*}
	Let $w'_d = (i,i+1) \tilde{w}_d$. Clearly $w_1', \cdots, w_t'$ are disjoint cycles, and the proof is completed  for $(i,i+1)\vartheta_{i,n+1} = \vartheta_{i+1,n+1}$. 
\end{proof}

\begin{remark}
	Let $\alpha$, $\iota_i(\alpha)$, $w_s$, $w_s'$ be as above.  Assume $w_s=(j_1\cdots j_r)$. Then $w_s'=(j_1'\cdots j_r')$ if $i\notin w_s$, where $j_u'= \vartheta_{i,n+1}(j_u)$, or more explicitly, $j_u'=j_u$ if $1\le j_u <i$, and $j_u'=j_u+1$ if $i\le j_u \le n$.
	
	If $i\in w_s$, we may assume $j_1=i$ without loss of generality. Then in this case, $w'_s= (i j_1'\cdots j_r')$, where again $j_u'=\vartheta_{i,n+1}(j_u)$. In particular, $j_1'=i+1$.
\end{remark}

\begin{lemma} \label{lem-isoext}
	Let $\alpha \in \frak{D}_n$, and $i \in \{1, \dots, n+1 \}$. If $\{ j_1, \dots, j_t\}$ is the set of isolated points of $\alpha$, then $\{ i, \vartheta_{i,n+1}(j_1), \cdots, \vartheta_{i,n+1}(j_t)\}$ is the set of isolated points of $\iota_i(\alpha) \in \frak{D}_{n+1}$.
\end{lemma}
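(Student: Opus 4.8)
The plan is to exploit the fact that $\iota_i(\alpha)$ is by definition the conjugate $\vartheta_{i,n+1}\,\alpha\,(\vartheta_{i,n+1})^{-1}$ of $\alpha$ inside $\frak{S}_{n+1}$, and that conjugation transports fixed points along the conjugating permutation. So the whole statement should reduce to identifying the fixed-point set of $\alpha$ in $\frak{S}_{n+1}$ and pushing it forward along $\vartheta_{i,n+1}$.

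First I would record the elementary observation that for any $\sigma, g \in \frak{S}_{n+1}$ and any point $k$, the point $k$ is fixed by $g\sigma g^{-1}$ if and only if $g^{-1}(k)$ is fixed by $\sigma$; equivalently, the set of points fixed by $g\sigma g^{-1}$ is the image under $g$ of the set of points fixed by $\sigma$. Applying this with $\sigma = \alpha$ and $g = \vartheta_{i,n+1}$ reduces the problem to identifying the fixed points of $\alpha$ as an element of $\frak{S}_{n+1}$ and then applying $\vartheta_{i,n+1}$ to them.

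Next I would identify those fixed points. Since $\alpha \in \frak{D}_n$ is viewed in $\frak{D}_{n+1}$ as the permutation that fixes $n+1$, its fixed points in $\{1, \dots, n+1\}$ are exactly $\{j_1, \dots, j_t\} \cup \{n+1\}$, i.e.\ the original isolated points of $\alpha$ together with the extra point $n+1$. Consequently the fixed points of $\iota_i(\alpha)$ are $\vartheta_{i,n+1}(\{j_1, \dots, j_t, n+1\}) = \{\vartheta_{i,n+1}(j_1), \dots, \vartheta_{i,n+1}(j_t), \vartheta_{i,n+1}(n+1)\}$.

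Finally I would compute the single new value: since $\vartheta_{i,n+1} = (i, i+1, \dots, n, n+1)$ sends $n+1$ to its first entry $i$, we have $\vartheta_{i,n+1}(n+1) = i$, which yields precisely the claimed set $\{i, \vartheta_{i,n+1}(j_1), \dots, \vartheta_{i,n+1}(j_t)\}$. There is essentially no obstacle here; the only point requiring a little care is to remember that passing from $\frak{D}_n$ to $\frak{D}_{n+1}$ turns $n+1$ into a fixed point of $\alpha$, and that it is exactly this point that conjugation by $\vartheta_{i,n+1}$ carries to the new isolated point $i$. As a sanity check, taking $i = n+1$ gives $\vartheta_{n+1,n+1} = \id$ and returns the set $\{j_1, \dots, j_t, n+1\}$, which is consistent with $\iota_{n+1}(\alpha) = \alpha$.
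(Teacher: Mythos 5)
Your proof is correct and follows essentially the same route as the paper: the paper also conjugates the explicit cycle decomposition of $\alpha$ (chords times $1$-cycles, including the extra $1$-cycle $(n+1)$) by $\vartheta_{i,n+1}$ and reads off the relabeled $1$-cycles, which is just the concrete form of your observation that conjugation transports the fixed-point set. The key computation $\vartheta_{i,n+1}(n+1)=i$ appears identically in both arguments.
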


\begin{proof}
	Let $\alpha \in \frak{D}_n$, and $i \in \{1, \dots, n+1\}$. Assume $\{ j_1, \dots, j_t\}$ is the set of isolated points of $\alpha$. Then $\alpha$ has $m = {\frac{n-t}{2}}$ chords 
	$
	\alpha_1, \dots, \alpha_{m}.  
	$
	Then
	\begin{align*}
		\iota_i(\alpha) =& \vartheta_{i,n+1} \alpha (\vartheta_{i,n+1})^{-1}  \\
		=& \vartheta_{i,n+1} \alpha_1 \cdots \alpha_m(j_1)\cdots(j_t) (n+1) (\vartheta_{i,n+1})^{-1}  \\
		=& (\vartheta_{i,n+1} \alpha_1 \cdots \alpha_m (\vartheta_{i,n+1})^{-1}) (\vartheta_{i,n+1}(j_1))\cdots(\vartheta_{i,n+1}(j_t)) (\vartheta_{i,n+1}(n+1)).
	\end{align*}
	Thus 
	$$
	\{\vartheta_{i,n+1}(j_1), \cdots, \vartheta_{i,n+1}(j_t), \vartheta_{i,n+1}(n+1) = i\}
	$$ 
	is the set of isolated points of $\iota_i(\alpha) \in \frak{D}_{n+1}$. 
\end{proof}

\begin{proposition} \label{prop-ptextreg}
	Assume that $\alpha \in \frak{D}_n$ is a regular quasi-diagram. Then the $i$-expansion $\iota_i(\alpha) \in \frak{D}_{n+1}$ is regular for all $i \in \{1, \dots, n+1 \}$.
\end{proposition}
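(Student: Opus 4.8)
The plan is to verify directly that $\iota_i(\alpha)$ satisfies condition (2) of Theorem \ref{thm-main}: every face (orbit of $\zeta_{n+1}\iota_i(\alpha)$) contains either the point $1$ or an isolated point of $\iota_i(\alpha)$. Since $\alpha$ is assumed regular, the same characterization tells us that every face of $\alpha$ contains $1$ or an isolated point of $\alpha$. So the strategy is to combine the explicit descriptions of the faces of $\iota_i(\alpha)$ from Lemma \ref{lem-facext} with the description of its isolated points from Lemma \ref{lem-isoext}, and then match the faces up one by one against the regularity of $\alpha$.

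First I would treat the generic case $i \neq n+1$. Write $\zeta_n \alpha = w_1 \cdots w_t$ and let $w_d$ be the unique face containing $i$. The crucial free input is Lemma \ref{lem-isoext}: the point $i$ is \emph{always} an isolated point of $\iota_i(\alpha)$. By the description in Lemma \ref{lem-facext}(2) and the remark following it, the face $w_d'$ arising from $w_d$ has $i$ in its support, so $w_d'$ already contains an isolated point of $\iota_i(\alpha)$ with no further work. For every other face $w_s$ (so $i \notin w_s$) we have $w_s' = \vartheta_{i,n+1} w_s (\vartheta_{i,n+1})^{-1}$, i.e. $w_s'$ is just the image of $w_s$ under $\vartheta_{i,n+1}$. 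Regularity of $\alpha$ gives that $w_s$ contains $1$ or an isolated point $j_k$ of $\alpha$; in the first subcase we have $i \neq 1$ (as $1 \in w_s$ but $i \notin w_s$), so $\vartheta_{i,n+1}$ fixes $1$ and hence $1 \in w_s'$, while in the second subcase $\vartheta_{i,n+1}(j_k)$ is, again by Lemma \ref{lem-isoext}, an isolated point of $\iota_i(\alpha)$ lying in $w_s'$.

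Then I would dispose of the boundary case $i = n+1$ separately, using Lemma \ref{lem-facext}(1): here $\vartheta_{n+1,n+1} = \id$, the isolated points of $\iota_{n+1}(\alpha)$ are exactly those of $\alpha$ together with $n+1$, and each face $w_s$ of $\alpha$ is either unchanged (when $1 \notin w_s$) or replaced by $(1,n+1)w_s$ (when $1 \in w_s$). In the latter case the face still contains $1$ in its support; in the former case regularity forces $w_s$ to contain an isolated point of $\alpha$, which stays isolated in $\iota_{n+1}(\alpha)$ and lies in the unchanged face. Either way the required condition holds.

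The only delicate point, and the thing I would state most carefully, is the bookkeeping of the point $1$ under $\vartheta_{i,n+1}$: one must note that $\vartheta_{i,n+1}$ fixes $1$ precisely when $i \geq 2$, and check that whenever a face other than $w_d$ contains $1$ we are automatically in that range. I expect the main obstacle to be organizing the case distinctions ($i = 1$, $2 \leq i \leq n$, $i = n+1$) cleanly, so that the single observation ``$i$ is itself an isolated point of the expansion'' disposes of the face $w_d$ uniformly while conjugation by $\vartheta_{i,n+1}$ transports the regularity of the remaining faces. No genuine estimate is involved; the entire content lies in the careful translation of faces and isolated points through Lemmas \ref{lem-facext} and \ref{lem-isoext}.
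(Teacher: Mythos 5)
Your proposal is correct and follows essentially the same route as the paper's proof: both verify condition (2) of Theorem \ref{thm-main} by running through the faces $w_s'$ given by Lemma \ref{lem-facext}, splitting into the cases $s\ne d$ with $1\in w_s$ (where $i>1$ forces $\vartheta_{i,n+1}$ to fix $1$), $s\ne d$ with an isolated point (transported by Lemma \ref{lem-isoext}), and $s=d$ (where the new isolated point $i$ lands in $w_d'$), with the $i=n+1$ case handled separately. No substantive difference from the paper's argument.
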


\begin{proof}
	Write
	$
	\zeta_n \alpha = w_1 \cdots w_t,
	$
	where $w_s$'s are all faces of $\alpha$. By Theorem \ref{thm-main}, each $w_s$ contains $1$ or at least one isolated point of $\alpha$. If $i = n+1$, then it is easy to check that $\iota_{n+1}(\alpha)$ is also regular by Lemma \ref{lem-facext}. 
	
	Now we assume that $i \neq n+1$. Then there is $d \in \{ 1, \dots, t\}$ such that $i \in w_d$. By Lemma \ref{lem-facext}, $
	\zeta_{n+1} \iota_i(\alpha) = w'_1 \cdots w'_t,
	$ 
	where
	$$
	w'_s = \begin{cases} \vartheta_{i,n+1} w_{s} (\vartheta_{i,n+1})^{-1}, &  \text{\quad if } s \neq d, \\
		\vartheta_{i+1,n+1} w_{d} (\vartheta_{i,n+1})^{-1} & \text{\quad otherwise.} \end{cases}
	$$
	
	We will show that each  $w_s'$ contains either some isolated point of $\iota_i(\alpha)$ or $1$. This can be checked case by case.
	
	(1) $s\ne d$, and $w_s$ contains some isolated point $j$ of $\alpha$. Then $\vartheta_{i,n+1}(j)$ is an isolated point by Lemma \ref{lem-isoext}, and clearly $\vartheta_{i,n+1}(j) \in w'_s$. 
	
	(2) $s\ne d$ and $1\in w_s$. Then $i > 1$ by assumption, otherwise, $w_s = w_d$ which is a contradiction. Thus $1=\vartheta_{i,n+1}(1) \in w'_s$. 
	
	(3) $s=d$. Since $w_d$ contains $i$, then 
	$
	\tilde{w}_d = \vartheta_{i,n+1}w_d (\vartheta_{i,n+1})^{-1}
	$ 
	contains $i+1$ and does not contain $i$. Thus $w'_d = (i,i+1)\tilde{w}_d$ contains $i$ which is an isolated point of $\iota_i(\alpha)$ by Lemma \ref{lem-isoext}.
\end{proof}

The above result tells us that if a quasi-diagram $\alpha \in \frak{D}_n$ is regular, then $\alpha$ is also regular when viewed as a quasi-diagram in $\frak{D}_m$ for any $m>n$. 

\begin{proposition} \label{prop-ptextmax}
	Let $\alpha \in \frak{D}_n$ be a maximal quasi-diagram. Then $\iota_i(\alpha) \in \frak{D}_{n+1}$ is also a maximal quasi-diagram for all $i \in \{1, \dots, n+1 \}$.
\end{proposition}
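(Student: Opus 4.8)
The plan is to read off the number of faces of $\iota_i(\alpha)$ directly from Lemma \ref{lem-facext}, so that this proposition becomes essentially a corollary. Recall that $\alpha$ being maximal means $\zeta_n\alpha$ is a single cycle, i.e. in the notation $\zeta_n\alpha = w_1\cdots w_t$ we have $t=1$ and $w_1$ is an $n$-cycle containing every point of $\{1,\dots,n\}$. Lemma \ref{lem-facext} expresses $\zeta_{n+1}\iota_i(\alpha)$ as a complete product $w_1'\cdots w_t'$ of disjoint cycles, with the $w_s'$ in bijection with the $w_s$. Since there is only one $w_s$ to begin with, there is only one $w_s'$, so $\iota_i(\alpha)$ has exactly one face; that is precisely the definition of maximal.

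Concretely I would split into the two cases of the lemma and verify that the single surviving cycle really is an $(n+1)$-cycle. When $i=n+1$, the unique face $w_1$ contains $1$, so we are in the second branch of Lemma \ref{lem-facext}(1) and $w_1' = (1,n+1)w_1$; a short check (writing $w_1=(1,a_2,\dots,a_n)$ and composing right to left) shows this inserts $n+1$ immediately after the $\alpha$-preimage of $1$, producing the single cycle $(1,a_2,\dots,a_n,n+1)$. When $i\neq n+1$, the face $w_1$ contains $i$, so by Lemma \ref{lem-facext}(2) we have $w_1' = \vartheta_{i+1,n+1}\,w_1\,(\vartheta_{i,n+1})^{-1}$; by the description in the remark following that lemma, writing $w_1 = (i,j_2,\dots,j_n)$ gives $w_1' = (i,i+1,j_2',\dots,j_n')$ with $j_u'=\vartheta_{i,n+1}(j_u)$, again a single $(n+1)$-cycle. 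In either case $\zeta_{n+1}\iota_i(\alpha)$ is a single cycle on $\{1,\dots,n+1\}$.

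The only point one must be careful about is completeness: that $w_1'$ genuinely exhausts $\{1,\dots,n+1\}$ rather than leaving $n+1$ (or the newly introduced point $i+1$) stranded as a separate fixed point, which would spuriously increase the face count. But this is exactly what Lemma \ref{lem-facext} already guarantees, since it asserts that $w_1'\cdots w_t'$ is a \emph{complete} product of disjoint cycles on $\{1,\dots,n+1\}$; the number of faces is then simply $t$. Consequently there is no genuine obstacle at this stage: the substantive combinatorial work has been carried out in Lemma \ref{lem-facext}, and the maximality of $\iota_i(\alpha)$ follows at once from the case $t=1$.
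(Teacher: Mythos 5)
Your proof is correct and follows exactly the paper's argument: the paper likewise deduces maximality of $\iota_i(\alpha)$ from the fact that Lemma \ref{lem-facext} shows expansions preserve the number of faces, so one face stays one face. Your extra verification that the single face is genuinely an $(n+1)$-cycle is a harmless elaboration of what the lemma already guarantees.
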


\begin{proof}
	By definition $\alpha$ is maximal if and only if $\alpha$ has only one face. Since expansions preserves the number of faces, we know that each $\iota_i(\alpha)$ has one face, and hence is maximal.
\end{proof}

Dual to the notion of expansions, we may also introduce {\it contractions} of a quasi-diagram at isolated points. 
Let $\alpha \in \frak{D}_n$ be a quasi-diagram such that $\alpha$ is not a chord diagram, i.e., the set of isolated points of $\alpha$ is not empty. Assume $i \in \{1, \dots, n \}$ is an isolated point of $\alpha$. Then we define
$$
\delta_i(\alpha) : = (\vartheta_{i,n})^{-1} \alpha \vartheta_{i,n}. 
$$
As above, we may identify $\mathfrak S_{n-1}$ as the subgroup of $\mathfrak S_n$ which fixes $n$. Now $n$ is an isolated point of $\delta_i(\alpha)$, and we may view $\delta_i(\alpha)$ as a quasi-diagram in $\frak{D}_{n-1}$ by omitting the isolated point $n$. We call $\delta_i(\alpha)$  the {\it $i$-contraction} of $\alpha$, or the contraction of $\alpha$ at the position $i$. Note that we always have
$
\delta_i \iota_i = \id,
$
and 
$
\iota_i \delta_i  = \id
$
when the composition is well defined. 

The following two lemmas are the dual versions of Lemma \ref{lem-facext} and Lemma \ref{lem-isoext}, respectively. The proofs are easily obtained by applying the fact that $\iota_i\delta_i(\alpha)=\alpha$ and Lemma \ref{lem-facext} and  Lemma \ref{lem-isoext}, and we omit them here. 

\begin{lemma} \label{lem-facered}
	Let $\alpha \in \frak{D}_n$, and let $i \in \{1, \dots, n \}$ be an isolated point of $\alpha$. Assume
	$
	\zeta_n \alpha = w_1 \cdots w_t,
	$
	where $w_s$'s are faces of $\alpha$.
	\begin{itemize}
		\item[(1)]If $i = n$, then $\zeta_{n-1}\delta_i(\alpha)= w_1'\cdots w_t'$, where 
		$$
		w'_s = \begin{cases}  w_{s} & \text{\quad if } 1 \notin w_{s}, \\
			(1,n) w_{s} & \text{\quad otherwise.} \end{cases}
		$$
		\item[(2)]  If $i \neq n$, then  $\zeta_{n-1}\delta_i(\alpha)= w_1'\cdots w_t'$, where  
		$$
		w'_s = \begin{cases} (\vartheta_{i,n})^{-1} w_{s} \vartheta_{i,n} & \text{\quad if } i \notin w_{s}, \\
			(\vartheta_{i+1,n})^{-1} w_{s} \vartheta_{i,n} & \text{\quad otherwise.} \end{cases}
		$$
	\end{itemize}
\end{lemma}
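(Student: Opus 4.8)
The plan is to deduce Lemma \ref{lem-facered} directly from Lemma \ref{lem-facext} by exploiting the identity $\iota_i\delta_i(\alpha)=\alpha$, so that $\zeta_{n-1}\delta_i(\alpha)$ never has to be computed from scratch. I would set $\beta := \delta_i(\alpha) \in \frak{D}_{n-1}$ (omitting the isolated point $n$ of $\delta_i(\alpha)$ as usual). Since $i$ is an isolated point of $\alpha$, the composition is well defined and $\iota_i(\beta)=\alpha$. Writing $\zeta_{n-1}\beta = u_1\cdots u_t$ as a complete product of disjoint cycles, the $u_s$ are exactly the faces of $\delta_i(\alpha)$ that the lemma describes. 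Now Lemma \ref{lem-facext}, applied to $\beta$ with the index shift $n\mapsto n-1$ and $n+1\mapsto n$, exhibits one face of $\alpha=\iota_i(\beta)$ for each $u_s$ and expresses it as the image of $u_s$. The entire argument then reduces to inverting these formulas.

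Concretely I would treat the two cases separately. For $i=n$, contraction at the top is trivial on permutations (after forgetting $n$), and Lemma \ref{lem-facext}(1) gives $w_s=u_s$ when $1\notin u_s$ and $w_s=(1,n)u_s$ when $1\in u_s$, where $w_s$ is the face of $\alpha$ corresponding to $u_s$. As $(1,n)$ is an involution, inverting is immediate: $u_s=w_s$, resp.\ $u_s=(1,n)w_s$, which are the two formulas of case (1). For $i\ne n$, Lemma \ref{lem-facext}(2) gives $w_s=\vartheta_{i,n}u_s\vartheta_{i,n}^{-1}$ when $i\notin u_s$ and $w_s=\vartheta_{i+1,n}u_s\vartheta_{i,n}^{-1}$ when $i\in u_s$; inverting yields $u_s=\vartheta_{i,n}^{-1}w_s\vartheta_{i,n}$, resp.\ $u_s=\vartheta_{i+1,n}^{-1}w_s\vartheta_{i,n}$, which are precisely the formulas claimed for $\delta_i(\alpha)$ in case (2).

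The one point requiring genuine (if short) verification, and which I expect to be the main obstacle, is that the case hypotheses match. Lemma \ref{lem-facext} distinguishes cases by whether $i$ (resp.\ $1$) lies in the face $u_s$ of the \emph{smaller} diagram $\beta$, whereas Lemma \ref{lem-facered} phrases the dichotomy in terms of the face $w_s$ of the \emph{larger} diagram $\alpha$. So I must establish the equivalence $i\in u_s \Leftrightarrow i\in w_s$ (and $1\in u_s\Leftrightarrow 1\in w_s$ in case (1)). Each direction is a one-line support computation: since $u_s\subseteq\{1,\dots,n-1\}$, conjugation by $\vartheta_{i,n}$ can only introduce $i$ into the support through the preimage $\vartheta_{i,n}^{-1}(i)=n\notin u_s$, so $i\notin u_s$ forces $i\notin w_s$; and when $i\in u_s$, evaluating gives $w_s(i)=\vartheta_{i+1,n}\bigl(u_s(\vartheta_{i,n}^{-1}(i))\bigr)=\vartheta_{i+1,n}(u_s(n))=\vartheta_{i+1,n}(n)=i+1\ne i$, so $i$ lies in the support of $w_s$. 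Since $i$ belongs to exactly one face on each side and the faces correspond bijectively, this yields the required equivalence, and the inverted formulas are then exactly the assertion. The bookkeeping for case (1) is entirely analogous, using that the face of $\alpha$ through $n$ also contains $1$ because $\zeta_n\alpha(n)=\zeta_n(n)=1$.
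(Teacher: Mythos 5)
Your argument is correct and is precisely the route the paper intends: the paper omits the proof of this lemma, remarking only that it follows from $\iota_i\delta_i(\alpha)=\alpha$ together with Lemma \ref{lem-facext}, which is exactly what you carry out by setting $\beta=\delta_i(\alpha)$, applying Lemma \ref{lem-facext} to $\beta$, and inverting the resulting formulas. Your additional check that the case dichotomy transfers correctly (i.e., $i\in u_s \Leftrightarrow i\in w_s$, and likewise for $1$ in case (1)) supplies the one detail the paper leaves implicit.
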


\begin{lemma} \label{lem-isoext2}
	Let $\alpha \in \frak{D}_n$, and $i \in \{1, \dots, n \}$ be an isolated point of $\alpha$. If $\{ j_1, \dots, j_t\}$ is the set of isolated points of $\alpha$, then 
	$$
	\{(\vartheta_{i,n})^{-1}(j_1), \cdots, (\vartheta_{i,n})^{-1}(j_t)\} \setminus \{ n \}
	$$
	is the set of isolated points of $\delta_i(\alpha) \in\frak{D}_{n-1}$.
\end{lemma}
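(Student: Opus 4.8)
The plan is to deduce this from Lemma \ref{lem-isoext} by running that lemma in reverse, using the identity $\iota_i \delta_i(\alpha) = \alpha$, which is valid precisely because $i$ is an isolated point of $\alpha$ (so that the contraction $\delta_i$, and hence the composition, is defined). First I would set $\beta := \delta_i(\alpha) \in \frak{D}_{n-1}$, so that $\iota_i(\beta) = \alpha$. Since $i$ was an isolated point of $\alpha \in \frak{D}_n$ we have $i \in \{1, \dots, n\}$, which is exactly the admissible range $\{1, \dots, (n-1)+1\}$ for applying the $i$-expansion to a diagram in $\frak{D}_{n-1}$; thus Lemma \ref{lem-isoext} applies to $\beta$ with $n-1$ in place of $n$, and the relevant cycle there is $\vartheta_{i,n} = (i, i+1, \dots, n)$, matching the permutation used in the definition of $\delta_i$.

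Applying Lemma \ref{lem-isoext} to $\beta$, I get that the set of isolated points of $\alpha = \iota_i(\beta)$ equals
$$
\{i\} \cup \vartheta_{i,n}\bigl(\{\text{isolated points of } \beta\}\bigr).
$$
Next I would compare this against the given set $\{j_1, \dots, j_t\}$ of isolated points of $\alpha$. Since $i$ itself lies among the $j_k$, removing it yields
$$
\{j_1, \dots, j_t\} \setminus \{i\} = \vartheta_{i,n}\bigl(\{\text{isolated points of } \beta\}\bigr),
$$
and applying $(\vartheta_{i,n})^{-1}$ to both sides recovers the set of isolated points of $\beta = \delta_i(\alpha)$ on the right.

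The only point requiring care is the bookkeeping around the omitted point $n$. Because $\vartheta_{i,n}(n) = i$, we have $(\vartheta_{i,n})^{-1}(i) = n$; hence applying $(\vartheta_{i,n})^{-1}$ to the \emph{full} set $\{j_1, \dots, j_t\}$ (rather than to $\{j_1, \dots, j_t\} \setminus \{i\}$) produces the isolated points of $\beta$ together with the spurious extra point $n$. This is exactly why the claimed formula discards $\{n\}$, and it also reflects that $\delta_i(\alpha)$ lives in $\frak{D}_{n-1}$, where $n$ is no longer a point. I do not expect a genuine obstacle here: the argument is entirely formal once Lemma \ref{lem-isoext} is invoked in the reverse direction, and the main thing to check is that the dimension shift $n \leftrightarrow n-1$ and the passage to the inverse $(\vartheta_{i,n})^{-1}$ are tracked consistently throughout.
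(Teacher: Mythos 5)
Your argument is correct and is exactly the route the paper indicates (the paper omits the proof, saying it follows from $\iota_i\delta_i(\alpha)=\alpha$ together with Lemma \ref{lem-isoext}): you apply Lemma \ref{lem-isoext} to $\beta=\delta_i(\alpha)\in\frak{D}_{n-1}$ and invert, and your bookkeeping of the spurious point $n=(\vartheta_{i,n})^{-1}(i)$ is the right way to account for the set difference $\setminus\{n\}$ in the statement. No gaps.
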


By Proposition \ref{prop-ptextreg}, an expansion of a regular quasi-diagram remains regular, but the regularity is not preserved under taking contractions. For instance,  $(13) \in \frak{D}_3$ is regular while its $2$-contraction  $(12) \in \frak{D}_2$ is not regular (see Example \ref{exm-ptext2}). 
However, the converse version of Proposition \ref{prop-ptextmax} is true since an contraction preserves the number of faces. 

\begin{proposition} \label{prop-ptredmax}
	Let $\alpha \in \frak{D}_n$ be a maximal quasi-diagram and $i \in \{1, \dots, n \}$ be an isolated point of $\alpha$. Then $\delta_i(\alpha) \in \frak{D}_{n-1}$ is maximal. 
\end{proposition}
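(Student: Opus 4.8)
The plan is to prove the contraction-analogue of Proposition~\ref{prop-ptextmax}, and the crux is precisely the remark preceding the statement: a contraction preserves the number of faces. Recall that $\alpha$ is maximal exactly when $\zeta_n\alpha$ has a single orbit, i.e. $\alpha$ has exactly one face. So it suffices to show that $\delta_i(\alpha)$ and $\alpha$ have the same number of faces, and then specialize to the case of one face.

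I would carry this out through the reduction identity $\iota_i\delta_i(\alpha)=\alpha$, which is legitimate here because $i$ is by hypothesis an isolated point of $\alpha$, so the composition is well defined. First I would record that an expansion preserves the number of faces: by Lemma~\ref{lem-facext}, applied with $\delta_i(\alpha)\in\frak{D}_{n-1}$ in the role of the diagram and $i\in\{1,\dots,n\}$ in the role of the position, the product $\zeta_n\,\iota_i(\delta_i(\alpha))=w_1'\cdots w_t'$ is a complete product of exactly $t$ disjoint cycles whenever $\zeta_{n-1}\delta_i(\alpha)=w_1\cdots w_t$ is. Hence $\delta_i(\alpha)$ and $\iota_i(\delta_i(\alpha))$ have equally many faces. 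Combining this with $\iota_i\delta_i(\alpha)=\alpha$ yields that $\delta_i(\alpha)$ and $\alpha$ have the same number of faces. Since $\alpha$ is maximal it has one face, whence $\delta_i(\alpha)$ has one face and is maximal.

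Alternatively, the conclusion can be read straight off Lemma~\ref{lem-facered}, which already exhibits $\zeta_{n-1}\delta_i(\alpha)$ as a product $w_1'\cdots w_t'$ indexed by the faces $w_1,\dots,w_t$ of $\alpha$, so the face count is manifestly unchanged. If one takes this route, the only point to verify is that no $w_s'$ degenerates to an empty cycle, equivalently that the face $w_{s_0}$ containing $i$ has length at least $2$. This is automatic: since $i$ is isolated we have $\alpha(i)=i$, so $\zeta_n\alpha(i)=i+1\neq i$, and hence $(i)$ is never a singleton face; removing $i$ from $w_{s_0}$ therefore leaves a nonempty cycle.

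I expect no serious obstacle, the argument being essentially bookkeeping once the face-count invariance is in hand. The one place demanding genuine care is the index shift: Lemma~\ref{lem-facext} and Lemma~\ref{lem-facered} are stated for the passage $\frak{D}_n\leftrightarrow\frak{D}_{n+1}$, whereas here they must be invoked for $\frak{D}_{n-1}\leftrightarrow\frak{D}_n$, so the ambient degree and the admissible range of the position $i$ have to be tracked consistently. Relatedly, one must keep in mind that $\delta_i(\alpha)$ is regarded as an element of $\frak{D}_{n-1}$ only after omitting the isolated point $n$, which is exactly the juncture at which the reduction identity $\iota_i\delta_i=\id$ is applied.
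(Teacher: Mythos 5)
Your proposal is correct and follows essentially the same route as the paper, which states this proposition without a separate proof body, justifying it by the preceding remark that a contraction preserves the number of faces (the content of Lemma~\ref{lem-facered}), exactly as in your second, direct argument. Your extra check that the face containing $i$ has length at least $2$ (since $\zeta_n\alpha(i)=\zeta_n(i)\neq i$ for an isolated point $i$) is a sound piece of bookkeeping that the paper leaves implicit.
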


Let $\alpha \in \frak{D}_n$, and $\alpha' \in \frak{D}_{n+r}$ be quasi-diagrams. We say $\alpha'$ is an iterated expansion of $\alpha$ if 
$$
\iota_{i_r}\cdots \iota_{i_2}\iota_{i_1} (\alpha) = \alpha' \in \frak{D}_{n+r}.
$$  
Which is equivalent to say
$$
\alpha = \delta_{i_1} \cdots \delta_{i_{r-1}}\delta_{i_r} (\alpha') \in \frak{D}_n.
$$

Recall that $\id \in \frak{D}_n$ is called the trivial quasi-diagram. In summary, we have the following result which connects (maximal) chord diagrams and (maximal) quasi-diagrams.

\begin{proposition} \label{prop-mdisptext}
	Every nontrivial quasi-diagram is an iterated expansion of a chord diagram, and every nontrivial maximal quasi-diagram is an iterated expansion of a maximal chord diagram. 
\end{proposition}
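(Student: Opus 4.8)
The plan is to prove the statement by induction on the number of isolated points of a quasi-diagram, using contractions to reduce to the chord-diagram case. Recall that a chord diagram is precisely a quasi-diagram with no isolated points, so the number of isolated points is the natural quantity to induct on.

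First I would handle the general (non-maximal) case. Let $\alpha \in \frak{D}_n$ be a nontrivial quasi-diagram, and suppose $\alpha$ has $t \geq 1$ isolated points; if $t = 0$ then $\alpha$ is already a chord diagram and there is nothing to prove. Pick any isolated point $i$ of $\alpha$ and form the contraction $\delta_i(\alpha) \in \frak{D}_{n-1}$. By Lemma \ref{lem-isoext2}, the set of isolated points of $\delta_i(\alpha)$ is obtained from that of $\alpha$ by removing the contracted point, so $\delta_i(\alpha)$ has exactly $t-1$ isolated points. I must check that $\delta_i(\alpha)$ is still nontrivial: since $\alpha$ is nontrivial it has at least one chord, and contraction only removes an isolated point and relabels via $\vartheta_{i,n}$, so the chords of $\alpha$ persist as chords of $\delta_i(\alpha)$; hence $\delta_i(\alpha) \neq \id$. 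By the induction hypothesis, $\delta_i(\alpha)$ is an iterated expansion of some chord diagram $\beta$, say $\delta_i(\alpha) = \iota_{i_{r-1}} \cdots \iota_{i_1}(\beta)$. Applying $\iota_i$ and using the identity $\iota_i \delta_i = \id$ (valid since $i$ is an isolated point of $\alpha$), I recover $\alpha = \iota_i(\delta_i(\alpha)) = \iota_i \iota_{i_{r-1}} \cdots \iota_{i_1}(\beta)$, exhibiting $\alpha$ as an iterated expansion of the chord diagram $\beta$.

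For the maximal case the same induction carries through, and the key extra input is that contraction preserves maximality. Suppose $\alpha$ is a nontrivial maximal quasi-diagram with $t \geq 1$ isolated points. Choosing an isolated point $i$, Proposition \ref{prop-ptredmax} guarantees that $\delta_i(\alpha)$ is again maximal, and by the previous paragraph it has $t-1$ isolated points and is nontrivial. By the induction hypothesis $\delta_i(\alpha)$ is an iterated expansion of a \emph{maximal} chord diagram, and reapplying $\iota_i$ as above expresses $\alpha$ itself as an iterated expansion of that maximal chord diagram. The base case $t = 0$ is immediate: a maximal quasi-diagram with no isolated points is by definition a maximal chord diagram.

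The main subtlety to be careful about is not a deep obstacle but rather the bookkeeping around the degenerate and boundary behavior of the contraction operation: one must confirm that $\delta_i$ is genuinely defined (which requires $i$ to be an isolated point, guaranteed by the choice), that the relations $\iota_i \delta_i = \id$ and $\delta_i \iota_i = \id$ hold in the precise form stated before Lemma \ref{lem-facered}, and that nontriviality is preserved so the induction hypothesis applies. All of these follow directly from the definitions of expansion and contraction together with Lemmas \ref{lem-isoext} and \ref{lem-isoext2} and Proposition \ref{prop-ptredmax}, so the argument is essentially a clean descending induction on the number of isolated points.
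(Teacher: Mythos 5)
Your proof is correct and is exactly the argument the paper intends: the proposition is stated as a summary of the expansion/contraction machinery (Lemma \ref{lem-isoext2}, Proposition \ref{prop-ptredmax}, and the identity $\iota_i\delta_i=\id$ at isolated points), and the paper omits the details, which your descending induction on the number of isolated points supplies. The one point worth spelling out — that nontriviality, and hence the presence of a chord, survives contraction — you have handled correctly.
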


\begin{remark}
	The result above is mentioned in \cite[Section 6]{CM} implicitly. 
\end{remark}

\section{Maximal Chord Diagrams} \label{sec-cd}

In this section, we discuss the set $\mathfrak M_{n}$ of maximal chord diagrams. We may assume $n=2m$ for some positive integer $m$. Note that chord diagrams exist only when $n$ is an even number.

There is a natural oriented surface without boundary associated to a chord diagram \cite{CM,K}.
Let $P_{2m}$ be a $2m$-gon with its sides labeled by $1,2, \dots, 2m$ consecutively around its boundary. 
A chord diagram $\alpha \in \frak{D}_{2m}$ defines a way to glue the sides $i, \alpha(i)$ of $P_{2m}$ so that yields an oriented surface $\mathscr{S}_\alpha$ with an one-face map formed by edges and vertices of the polygon. The number of vertices in $\mathscr{S}_\alpha$ equals to the number of faces of $\alpha$. Thus the formula of genus of $\mathscr{S}_\alpha$ is same to Equation \ref{equ-gen}, which implies that  $\mathscr{S}_\alpha$ has maximal possible genus if $\alpha$ is maximal. 

\begin{example}
The maximal chord diagram $\alpha = (13)(24) \in \frak{D}_4$ defines a way to glue $P_4$ to a torus with $2$ edges and one vertex (Figure \ref{p4}). 
\begin{figure}[h]
\centering
\begin{tikzpicture}[x=20, y=20, baseline=(current bounding box)]
\draw[magenta,-] (-1.4, -1.4) --  (-1.4,1.4) ; 
\draw[magenta,<<-] (-1.4, 0) --  (-1.4,1.4) ; 
\draw[green,-] (-1.4,1.4) -- (1.4, 1.4) ; 
\draw[green,<-] (0,1.4) -- (1.4, 1.4) ; 
\draw[magenta,-] (1.4, 1.4) -- (1.4, -1.4) ; 
\draw[magenta,->>] (1.4, 1.4) -- (1.4, 0) ; 
\draw[green,-] (1.4, -1.4) -- (-1.4, -1.4) ; 
\draw[green,->] (1.4, -1.4) -- (0, -1.4) ; 

\draw (0,1.5) node[above] {$1$};
\draw (0,-1.5) node[below] {$3$};
\draw (-1.5,0) node[left] {$2$};
\draw (1.5,0) node[right] {$4$};

\draw[fill=white] (7.5,0) ellipse (2.4 and 1.35) 
 (7,0) arc(120:60:1 and 1.25) arc(-60:-120:1 and 1.25);
\draw (7,0) arc(-120:-130:1 and 1.25) (8,0) arc(-60:-50:1 and 1.25);
\draw[green] (7.5,0.1) ellipse (1.2cm and 0.55cm);
\draw[magenta] (7.55,-0.163) arc (68:-42:0.735);
\fill (7.985,-0.65) circle (0.8pt) ;
\end{tikzpicture}
\caption{Glue $P_2$ by $\alpha=(13)(24)$} 
\label{p4}
\end{figure}
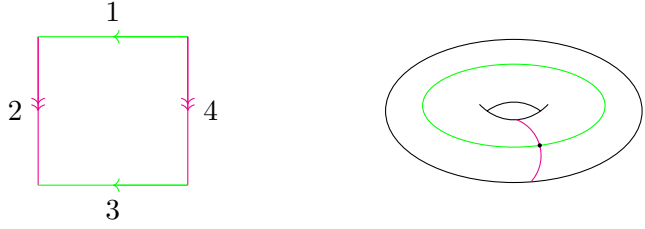
\end{example}

\begin{remark}
Let $\mathbb{S}_{A}$ be the marked surface of the gentle algebra $A$ associated to a chord diagram $\alpha$. Then $\widehat{\mathbb{S}_{A}}$ is an oriented surface which is dual to $\mathscr{S}_\alpha$
 by interchanging faces and vertices. 
If $\alpha$ is maximal, then $\mathscr{S}_\alpha$ and $\widehat{\mathbb{S}_{A}}$ both have one vertex and one face. On the other hand, since both edges in $\mathscr{S}_\alpha$ and $\widehat{\mathbb{S}_{A}}$ correspond to the chords of $\alpha$ with the same order counterclockwisely (or clockwisely) around the vertex in the surface, they are homeomorphic. 
\end{remark}

We summarize the properties of maximal chord diagrams as follows. 
   
\begin{proposition} \label{prop-mcd}
Let $\alpha \in \frak{D}_{2m}$ be a chord diagram, and $A$ the associated gentle algebra. Then the following statements are equivalent.
\begin{itemize}
	\item[(1)] $\alpha$ is maximal.
	\item[(2)] $\alpha$ is regular. 
	\item[(3)]  $\alpha $ is rotatably regular.
	\item[(4)] The $l$-th rotation of $\alpha$ is maximal for any integer $l$. 
	\item[(5)] The Koszul dual quasi-diagram $\alpha^!$ exists.
	\item[(6)] $\alpha$ is the Koszul dual of a maximal chord diagram.
	\item[(7)] $\gldim A = 2m-1$. 
\end{itemize}
Moreover, if $\alpha$ is maximal, then $m$ is an even number.
\end{proposition}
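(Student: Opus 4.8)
The plan is to show that each of conditions (2)--(7) is equivalent to condition (1), the whole equivalence resting on a single simplification: since $\alpha$ is a chord diagram it has \emph{no} isolated points, so every clause in the earlier theorems that mentions an isolated point becomes either vacuous or forcing. Once all seven conditions are seen to be synonyms for ``$\alpha$ is maximal'', the final ``moreover'' drops out of the genus formula.

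First I would dispatch $(1)\Leftrightarrow(2)$, $(1)\Leftrightarrow(3)$ and $(1)\Leftrightarrow(4)$. By Theorem \ref{thm-main}, $\alpha$ is regular exactly when each orbit of $\zeta\alpha$ contains $1$ or an isolated point of $\alpha$; with no isolated points this says every orbit of $\zeta\alpha$ contains $1$, which forces a single orbit, i.e.\ maximality; conversely a maximal $\alpha$ has a unique face, necessarily containing $1$, so it is regular. For $(1)\Leftrightarrow(3)$, Theorem \ref{thm-rot} characterizes rotatable regularity as ``$\alpha$ maximal, or every face of $\alpha$ contains an isolated point''; the second alternative is impossible for a chord diagram, since every face would need an isolated point but there are none, so rotatable regularity collapses to maximality. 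For $(1)\Leftrightarrow(4)$ I would invoke Lemma \ref{lem-rot}(2): each rotation $\zeta^l\cdot\alpha$ has exactly as many faces as $\alpha$, so $\alpha$ is maximal if and only if every rotation is.

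Next I would treat the Koszul-theoretic conditions. By Corollary \ref{cor-kos}, condition (5) (existence of $\alpha^!$) is the same as $\gldim A = 2m-1$, which is (7); Theorem \ref{thm-kosdual} identifies these in turn with $\alpha$ being of Type A, B or C. Since Types B and C each require an isolated point, a chord diagram can only be of Type A, namely a maximal chord diagram, so (5) and (7) both reduce to (1). For $(1)\Leftrightarrow(6)$ I would use the last sentence of Theorem \ref{thm-kosdual}, that $\alpha^!$ has the same type as $\alpha$: if $\alpha$ is a maximal chord diagram (Type A) then $\alpha^!$ exists and is again a maximal chord diagram, whence $\alpha=(\alpha^!)^!$ exhibits $\alpha$ as the Koszul dual of the maximal chord diagram $\alpha^!$; conversely a Koszul dual $\beta^!$ of a maximal (Type A) chord diagram $\beta$ is itself Type A, hence maximal. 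The identity $(\alpha^!)^!=\alpha$ at the diagram level I would justify from $(A^!)^!\cong A$ together with $\gldim A^!=2m-1$ (Remark \ref{rem-kosgldim}), which guarantees all the relevant algebras retain a single maximal path so that their associated diagrams are defined.

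Finally, for the ``moreover'' claim I would apply the genus formula \eqref{equ-gen}. A maximal chord diagram in $\frak{D}_{2m}$ has exactly $m$ chords and a single face, so \eqref{equ-gen} reads $2g=m-1+1=m$; as the genus $g$ is a nonnegative integer, $m=2g$ is even. The only step requiring genuine care is the Koszul block: one must check rigorously that Types B and C are excluded for chord diagrams and that the double-dual identity is legitimate. I do not expect a real difficulty there, since the absence of isolated points is precisely what makes every nontrivial clause collapse to maximality.
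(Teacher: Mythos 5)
Your proof is correct, and for the seven equivalences it takes essentially the same route as the paper: the paper likewise deduces $(1)\Leftrightarrow(2)$ from Theorem \ref{thm-main} (regularity with no isolated points forces every face to contain $1$, hence a single face), $(1)\Leftrightarrow(3)\Leftrightarrow(4)$ from Lemma \ref{lem-rot} and Theorem \ref{thm-rot}, and $(1)\Leftrightarrow(5)\Leftrightarrow(6)\Leftrightarrow(7)$ from Theorem \ref{thm-kosdual}; your explicit check that Types B and C are excluded because they require isolated points, and your use of $(\alpha^!)^!=\alpha$ via $(A^!)^!\cong A$ and Remark \ref{rem-kosgldim} for $(1)\Leftrightarrow(6)$, merely spell out what the paper compresses into one-line citations. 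The only genuine divergence is the ``moreover'' claim. The paper argues by parity of permutations: if $\alpha$ is maximal then $\zeta_{2m}\alpha$ is a single $2m$-cycle, which is an odd permutation; since $\zeta_{2m}$ is also odd, $\alpha$ must be even, and as $\alpha$ is a product of exactly $m$ disjoint transpositions this forces $(-1)^m=1$, i.e.\ $m$ even. You instead invoke the genus formula \eqref{equ-gen} to get $2g=m-1+1=m$ with $g$ a nonnegative integer. Both arguments are valid; the sign argument is purely combinatorial and self-contained within $\frak{S}_{2m}$, while yours leans on the topological derivation of \eqref{equ-gen} in Remark \ref{rmk-1} --- and is in fact precisely the alternative explanation the authors themselves record in the remark immediately following the proposition, where $g=m/2$ is computed and said to ``explain that $m$ has to be an even number.''
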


\begin{proof}
{ $(1) \Leftrightarrow (2)$}  follows from Theorem \ref{thm-main}, since a chord diagram is by definition isolated point free. 

{ $(1) \Leftrightarrow (3) \Leftrightarrow (4)$}  follows from Lemma \ref{lem-rot} and Theorem \ref{thm-rot}.

{ $(1) \Leftrightarrow (5) \Leftrightarrow (6) \Leftrightarrow (7)$}  follows from Theorem \ref{thm-kosdual}.

Moreover, if $\alpha$ is maximal, then $\zeta_{2m} \alpha = \eta$ for some  $2m$-cycle $\eta$. It forces that $\alpha$ is an even permutation and hence $m$ is an even number. 
 \end{proof}

\begin{remark}
	Let $A$  be a gentle algebra defined by a maximal chord diagram $\alpha \in \frak{M}_{2m}$, by Equation \ref{equ-gen}, the genus of the marked surface of $A$ is 
	$$
	g = \frac{1}{2}(\#\{\text{chords of } \alpha\}  - \# \{\text{faces of  } \alpha \}  +1) = \frac{1}{2}(m - 1 +1) = \frac{m}{2}.
	$$
Which also explains that $m$ has to be an even number. The above equation also suggests us that one should consider ${4g}$ rather than ${2m}$. 
\end{remark}

We need some further notions. Let $n$ be a positive integer and $E \subset \frak{D}_{n}$. We say $E$ is {\it closed under taking rotations} if the conjugate action
$$
\langle \zeta_n \rangle \times E \to E
$$
is well defined, where $\langle \zeta_n \rangle$ is the subgroup of $D_n$ generated by $\zeta_n$. We say $E$ is {\it closed under taking Koszul dual} if the map
$$
(-)^!: E \to E, \ \alpha \mapsto \alpha^!
$$
is well defined.

As we have shown in Proposition \ref{prop-mcd}, the set $\frak{M}_{4g} \subset \frak{D}_{4g}$ of maximal chord diagram is 
\begin{equation}
	\tag{$\ast$}
	\text{closed under taking rotations and Koszul dual. }
\end{equation}
Next we will show that the converse is also true, say, any nonempty subset of quasi-diagrams satisfies the condition  $(*)$ is contained in some $\frak{M}_{4g}$. 
The only exceptional cases are $n=1,2$.
\begin{example}
	Let $E= \{ \id \} \subset \frak{D}_2$. Then $E$ is closed under taking Koszul dual (Example \ref{exm-id2}), and closed under taking rotations (Theorem \ref{thm-rot}). 
\end{example}

\begin{proposition} \label{prop-setmcd}
Assume that $n \geq 3$ and $E \subset \frak{D}_{n}$ is a nonempty subset satisfying condition $(*)$. Then $n = 4g$ for some integer $g$, and $E \subset \frak{M}_{4g}$.
\end{proposition}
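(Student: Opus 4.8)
The plan is to prove that every element of $E$ is forced to be a maximal chord diagram; once this is established, both conclusions follow immediately, since choosing any $\alpha\in E$ (possible as $E\neq\emptyset$) and invoking the last clause of Proposition \ref{prop-mcd} shows that a maximal chord diagram lies in $\frak{D}_{2m}$ with $m$ even, whence $n=2m=4g$, and $E\subseteq\frak{M}_{4g}$.

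First I would extract the consequences of the two closure properties. Since $(-)^!\colon E\to E$ is well defined, the Koszul dual $\alpha^!$ exists for every $\alpha\in E$, so by Theorem \ref{thm-kosdual} each such $\alpha$ is of Type A, B, or C. Because $E$ is also closed under rotations, every rotation $\beta_l=\zeta^l\cdot\alpha$ lies in $E$ and is therefore itself of Type A, B, or C. The idea is then to fix an arbitrary $\alpha\in E$, assume it is not Type A (that is, not a maximal chord diagram), and produce a rotation belonging to none of the three types, contradicting closedness. The main tool is Lemma \ref{lem-rot}: rotation sends isolated points to isolated points via $i\mapsto\zeta^l(i)$ and preserves the number of faces.

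Suppose first $\alpha$ is Type B. Then $\alpha$ is maximal with an isolated point $p\in\{1,n\}$, and by Lemma \ref{lem-rot} every $\beta_l$ is again maximal with isolated point $\zeta^l(p)$. Being maximal, $\beta_l$ cannot be Type C; having an isolated point, it cannot be Type A; hence each $\beta_l$ is Type B, forcing $\zeta^l(p)\in\{1,n\}$ for all $l$. As $n\geq 3$, the orbit $\{\zeta^l(p)\}$ exhausts $\{1,\dots,n\}$ and so meets the complement of $\{1,n\}$, a contradiction. Suppose instead $\alpha$ is Type C. Then $\alpha$ has exactly one isolated point and exactly two faces, so by Lemma \ref{lem-rot} so does every $\beta_l$; such a $\beta_l$ is neither Type A nor Type B, hence must be Type C, which forces $(1,n)$ to be a chord of $\beta_l$ for every $l$. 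Rewriting ``$(1,n)$ is a chord of $\zeta^l\alpha\zeta^{-l}$'' as a condition on $\alpha$, this means $\{\zeta^{-l}(1),\zeta^{-l}(n)\}=\{b+1,b\}$ is a chord of $\alpha$ for every $l$, where $b=-l\bmod n$; as $l$ varies this demands that $\{b,b+1\}$ be a chord of $\alpha$ for all $b$, which is incompatible with $\alpha$ being an involution (it would place the point $2$ in both chords $\{1,2\}$ and $\{2,3\}$, forcing $\alpha(2)=1$ and $\alpha(2)=3$).

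I expect the Type C case to be the main obstacle: excluding it cannot be done by the soft ``orbit of a single point'' argument that disposes of Type B, and instead requires the explicit translation of when $(1,n)$ is a chord of a rotation back into a constraint on $\alpha$. Having ruled out Types B and C, every element of $E$ is Type A, i.e.\ a maximal chord diagram, so $E\subseteq\frak{M}_n$; the final count of faces via Proposition \ref{prop-mcd} then yields $n=4g$, completing the argument.
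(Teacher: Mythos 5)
Your proof is correct, and it follows the same overall strategy as the paper's: use closure under Koszul dual to force every element (and every rotation of every element) into one of the three types of Theorem \ref{thm-kosdual}, then use closure under rotations to derive a contradiction in the non-maximal-chord-diagram cases. Your elimination of Type B is essentially identical to the paper's: track the orbit of an isolated point under $\zeta$ and note that it must leave $\{1,n\}$ once $n\geq 3$. Where you genuinely diverge is Type C. The paper never confronts Type C head-on: it first deduces that every $\alpha\in E$ is \emph{rotatably regular} (regularity comes from the existence of the Koszul dual via Theorem \ref{thm-kosdual}, and rotation-closure upgrades it), and then Theorem \ref{thm-rot} kills Type C for free, since a Type C diagram has the singleton face $(1)$ containing no isolated point and so cannot be rotatably regular without being maximal. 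You instead avoid Theorem \ref{thm-rot} entirely and argue directly: all rotations of a Type C diagram must again be Type C, so $(1,n)$ is a chord of every $\zeta^l\cdot\alpha$, which translates into every consecutive pair $\{b,b+1\}$ being a chord of $\alpha$ and contradicts $\alpha$ being an involution for $n\geq 3$. Both routes are sound; the paper's is shorter given that Theorem \ref{thm-rot} is already in hand, while yours is more self-contained and makes the obstruction to Type C concrete. Your correct anticipation that Type C is "the main obstacle" is, in a sense, what Theorem \ref{thm-rot} is silently absorbing in the paper's version.
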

	
\begin{proof}
Assume $E \subset \frak{D}_{n}$ satisfies the condition $(*)$. Let $\alpha \in E$. We want to show that $\alpha$ is a maximal chord diagram. 

We claim that $\alpha$ is rotatably regular. Since $E$ is closed under taking Koszul dual by assumption. By Theorem \ref{thm-kosdual}, $\alpha \in E$ is regular. Since $E$ is closed under taking rotation by assumption, we have $\zeta^l \cdot \alpha \in E$ for any $l \in \mathbb{Z}$, thus $\zeta^l \cdot \alpha$ is regular for any $l \in \mathbb{Z}$. Thus $\alpha$ is rotatably regular. 

Then by Theorem \ref{thm-rot} and Theorem \ref{thm-kosdual}, $\alpha$ is one of the following cases
\begin{itemize}
\item[(i)] $\alpha$ is a maximal chord diagram. 
\item[(ii)] $\alpha$ is a maximal quasi-diagram with isolated points $1$ or $n$, or both. 
\end{itemize}
If $\alpha$ is a maximal chord diagram, then we are done. 
Now we assume $\alpha$ is a maximal quasi-diagram with an isolated point $i$, where $i\in \{1,n \}$.
Let $\beta_l = \zeta_n^l \cdot \alpha \in E$. Then by the proof of Theorem \ref{thm-rot}, 
$\beta_l$ is a maximal quasi-diagram with isolated point $\zeta_n^l(i)$. 
Note that $n \geq 3$. If $i =1$ (resp. $i = n$), 
then $\zeta_n(i) \neq 1,n$ (resp. $\zeta_n^2(i) \neq 1,n$). So the Koszul dual of $\beta_1$ (resp. $\beta_2$) does not exists by Theorem \ref{thm-kosdual}, which is a contradiction. 
\end{proof}

\begin{remark} \label{rem-tg-mc}
Chang and Schroll showed that for a gentle algebra $A$ with finite global dimension, 
the derived category $D^b(\mod A)$ has a full exceptional sequence if and only if 
the marked surface $\mathbb{S}_{A}$ of $A$ is not homeomorphic to $\mathbb{T}_{g,1,1}$, 
where $\mathbb{T}_{g,1,1}$ is an oriented surface of genus $g \geq 1$, with only one boundary 
component and only one marked point (\cite[Theorem 3.7]{CS}). By \cite[Lemma 3.3]{CS}, the class of gentle algebras of finite global dimension associated to $\mathbb{T}_{g,1,1}$ is just the class of gentle algebras associated to maximal chord diagrams $\frak{M}_{4g}$. 
\end{remark}

We have the following counting formulae of maximal chord diagrams and maximal quasi-diagrams.

\begin{proposition} \label{prop-enum} Let $g, n$ be positive integers, and assume $n=4t + q$, where $t, q$ are integers with $0\le q\le 3$. Then
\begin{itemize}
\item[(1)] {\rm (\cite[Equation 14]{WL}, see also \cite[Theorem 2]{HZ})}
$$
\#(\frak{M}_{4g})= \varepsilon_g :=  \frac{(4g)!}{4^g(2g+1)!}; 
$$
\item[(2)] {\rm (\cite[Section 6]{CM})} 
$$
\#\{\text{maximal quasi-diagrams} \in \frak{D}_n \} = \sum_{i=0}^t \begin{pmatrix} 4t + q \\ 4i \end{pmatrix} \varepsilon_i
$$
where we define $\varepsilon_0=1$.
\end{itemize} 
\end{proposition}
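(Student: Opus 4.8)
For part (1), I would simply cite the literature: the number $\varepsilon_g$ enumerates the gluings of a $4g$-gon into a single orientable surface of maximal genus, equivalently the maximal chord diagrams on $4g$ points, and this closed formula is established in \cite[Equation 14]{WL} and \cite[Theorem 2]{HZ}. So I would not reprove it, and would instead devote the argument to part (2).

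For part (2), the plan is to reduce the count of maximal quasi-diagrams to that of maximal chord diagrams via the contraction--expansion machinery of Section \ref{sec-ptext}. The main step is to set up, for each $0 \le i \le t$, a bijection between the maximal quasi-diagrams in $\frak{D}_n$ with exactly $n-4i$ isolated points and the pairs $(I,\beta)$, where $I \subseteq \{1,\dots,n\}$ is a subset of size $n-4i$ and $\beta \in \frak{M}_{4i}$. Given such an $\alpha$, I would take $I$ to be its set of isolated points and let $\beta$ be the chord structure of $\alpha$ transported along the unique order-preserving bijection $\{1,\dots,n\}\setminus I \xrightarrow{\sim} \{1,\dots,4i\}$; this $\beta$ is precisely the quasi-diagram obtained by contracting $\alpha$ at all of its isolated points, hence is a chord diagram, and it is maximal by iterating Proposition \ref{prop-ptredmax}. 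Conversely, from a pair $(I,\beta)$ I would recover $\alpha$ by iterated expansion, inserting isolated points at the positions prescribed by $I$; the result is maximal by iterating Proposition \ref{prop-ptextmax}. Since $\delta_i$ and $\iota_i$ are mutually inverse, these two assignments are inverse to one another, and Proposition \ref{prop-mdisptext} guarantees that every maximal quasi-diagram indeed arises this way. Finally, because a maximal chord diagram lives on a number of points divisible by $4$ (Proposition \ref{prop-mcd}), the number of non-isolated points of $\alpha$ is forced to equal $4i$ for some $0 \le i \le t$, which pins down the range of summation.

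Once the bijection is in place the count is immediate: for each $i$ there are $\binom{n}{n-4i} = \binom{4t+q}{4i}$ choices of the position set $I$ and $\#(\frak{M}_{4i}) = \varepsilon_i$ choices of $\beta$ (with the convention $\varepsilon_0 = 1$ coming from the empty chord diagram on $0$ points, corresponding to $\alpha = \id$), and summing over $i$ yields the stated formula. The main obstacle is the position bookkeeping in the bijection: the explicit shifts $\vartheta_{i,n+1}$ appearing in the expansions mean that inserting the isolated points one at a time relabels the remaining points, so I must verify that the iterated expansion depends only on the final position set $I$ and not on the order in which the points are inserted. I would handle this by induction on $|I|$, using Lemma \ref{lem-isoext} and Lemma \ref{lem-isoext2} to track exactly where the isolated points land and Lemma \ref{lem-facext} and Lemma \ref{lem-facered} to confirm that the faces transform consistently, so that the forward and backward maps are genuinely well defined and mutually inverse.
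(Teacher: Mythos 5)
Your argument is correct and is exactly the route the paper intends: the authors omit the proof of (2), remarking only that it is ``an easy consequence of Proposition \ref{prop-mdisptext},'' and your bijection $\alpha \leftrightarrow (I,\beta)$ via contraction at all isolated points and iterated expansion (with maximality preserved by Propositions \ref{prop-ptredmax} and \ref{prop-ptextmax}, and the constraint $4 \mid n-|I|$ coming from Proposition \ref{prop-mcd}) fills in precisely those omitted details. Your attention to the order-independence of the iterated insertions is the right point to be careful about, and the counting $\binom{4t+q}{4i}\varepsilon_i$ then follows as you state.
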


We mention that  there is some misprint in the original version of Proposition \ref{prop-enum} (2) appeared in \cite[Section 6]{CM}. 
Here we give a modified version, which is an easy consequence of Proposition \ref{prop-mdisptext} and we omit the proof.

Moreover, Cori and Marcus gave a counting formula of equivalence classes of maximal chord diagrams up to rotation \cite{CM}, and 
Krasko gave a counting formula of orbits of maximal chord diagrams $\frak{M}_{4g}$ under the action of the dihedral group $D_{4g}$ \cite{K}. See the example $\frak{M}_8$ in the appendix.

\section{Appendix. The Case $\frak{M}_8$}

There are $21$ different maximal chord diagrams in $\frak{M}_8$, but
only $4$ types of orbits for $\frak{M}_8$ under the group action of $D_8$. 
We list these four types of orbits in Figure \ref{type1}, Figure \ref{type2}, Figure \ref{type3}, Figure \ref{type4}.
In these figures, $\zeta, \gamma \in D_8$ are the rotation and reflection act on $\frak{M}_8$ defined in Section \ref{sec-dih}, and $(-)^!$ maps $\alpha \in \frak{M}_8$ to its Koszul dual $\alpha^!$. 

\begin{figure}[h]
\centering
\begin{tikzpicture}[commutative diagrams/every diagram]
\node (a1) at (0,0)  {\small$(15)(26)(37)(48)$};

\draw (1.4,0.3) edge[out=65, in=5, loop, distance=2.5em, cyan,<->] node[right] {\tiny{{$\gamma\cdot$}}} (1.63,-0.1); 
\draw (-1.4,0.3) edge[out=115, in=-180, loop, distance=2.5em, orange,<->] node[left] {\tiny{{$(-)^!$}}} (-1.63,-0.1); 

\path[commutative diagrams/.cd, every arrow, every label]
(a1) edge[out=-60, in=-120, loop, distance=2.5em,<->] node {$\zeta \cdot$} (a1);
\end{tikzpicture}
\caption{Type I} 
\label{type1}
\end{figure}

\begin{figure}[h]
\centering
\begin{tikzpicture}[commutative diagrams/every diagram]
\node (a1) at (-3.5,3)  {\small$(13)(24)(57)(68)$};
\node (a2) at (3.5,3)  {\small$(17)(28)(35)(46)$};
\node (a3) at (0,1.5)  {\small$(13)(28)(46)(57)$};
\node (a4) at (0,0)  {\small$(17)(24)(35)(68)$};

\draw (1.4+3.5,0.3+3) edge[out=65, in=5, loop, distance=2.5em, cyan,<->] node[right] {\tiny{{$\gamma\cdot$}}} (1.63+3.5,-0.1+3); 
\draw (-1.4-3.5,0.3+3) edge[out=115, in=-180, loop, distance=2.5em, cyan,<->] node[left] {\tiny{{$\gamma\cdot$}}} (-1.63-3.5,-0.1+3); 
\draw (1.4,0.3+1.5) edge[out=65, in=5, loop, distance=2.5em, orange,<->] node[right] {\tiny{{$(-)^!$}}} (1.63,-0.1+1.5); 

\path[commutative diagrams/.cd, every arrow, every label]
(a1) edge[bend right=40] node[swap] {$\zeta\cdot$} (a4)
(a4) edge[bend right=40] node[swap] {$\zeta\cdot$} (a2)
(a3) edge[bend right=20] node {$\zeta\cdot$} (a1)
(a3) edge[bend left=20,<-] node {$\zeta\cdot$} (a2)
(a1) edge[out=60, in=130, loop, orange, distance=2.5em,<->] node[swap] {$(-)^!$} (a1)
(a2) edge[out=60, in=130, loop, orange, distance=2.5em,<->] node[swap] {$(-)^!$} (a2)
(a3) edge[cyan,<->] node {$\gamma\cdot$} (a4)
(a4) edge[out=-60, in=-120, loop, distance=2.5em, orange,<->] node {$(-)^!$} (a4);
\end{tikzpicture}
\caption{Type II} 
\label{type2}
\end{figure}

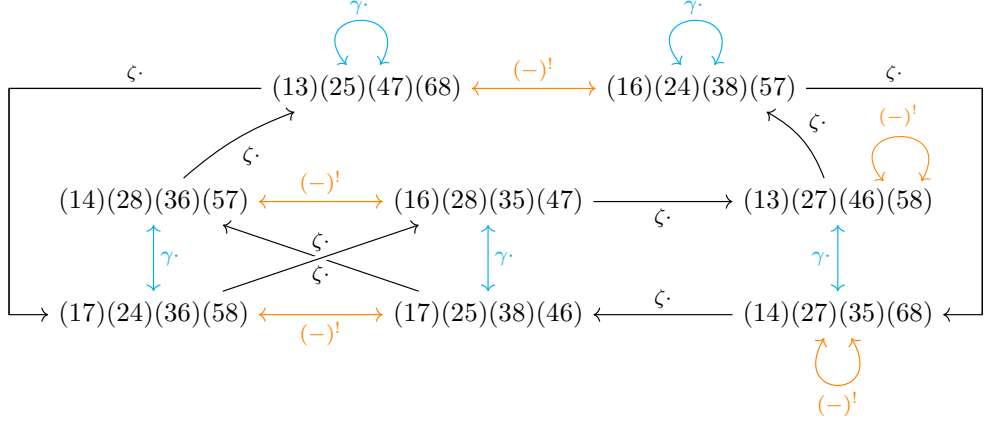
\begin{figure}[h]
\centering
\begin{tikzpicture}[commutative diagrams/every diagram]
\node (a1) at (-2.2,3)  {\small$(13)(25)(47)(68) $};
\node (a2) at (2.2,3)  {\small$(16)(24)(38)(57)$};
\node (a3) at (-5,1.5)  {\small$(14)(28)(36)(57)$};
\node (a4) at (-0.6,1.5)  {\small$ (16)(28)(35)(47)$};
\node (a5) at (-5,0)  {\small$(17)(24)(36)(58)$};
\node (a6) at (-0.6,0) {\small$ (17)(25)(38)(46)$};
\node (a7) at (4,1.5) {\small$(13)(27)(46)(58)$};
\node (a8) at (4,0) {\small$(14)(27)(35)(68)$};



\draw (5.1,1.77) edge[out=60, in=130, loop, orange, distance=2.5em,<->] node[above] {{\tiny$(-)^!$}} (4.6,1.77);

\path[commutative diagrams/.cd, every arrow, every label]
(a1) edge[out=60, in=130, loop, cyan, distance=2.5em,<->] node[swap] {$\gamma\cdot$} (a1)
(a1) edge[orange,<->] node {$(-)^!$} (a2)
(a1) edge[bend right=10,<-] node {$\zeta\cdot$} (a3)

(a2) edge[bend left=20,<-] node {$\zeta\cdot$} (a7)
(a2) edge[out=60, in=130, loop, cyan, distance=2.5em,<->] node[swap] {$\gamma\cdot$} (a2)

(a3) edge[cyan, <->] node {$\gamma\cdot$} (a5)
(a3) edge[<-] node[above] {$\zeta\cdot$} (a6)
(a3) edge[orange,<->] node {$(-)^!$} (a4)
(a4) edge[white, line width=1mm, -]  (a5)
(a4) edge[<-] node[below] {$\zeta\cdot$} (a5)
(a4) edge[cyan, <->] node {$\gamma\cdot$} (a6)
(a6) edge[,<-] node {$\zeta\cdot$} (a8)
(a5) edge[orange,<->] node[swap] {$(-)^!$} (a6)

(a7) edge[<-] node {$\zeta\cdot$} (a4)
(a7) edge[cyan,<->] node[swap] {$\gamma\cdot$} (a8)
(a8) edge[out=-60, in=-120, loop, distance=2.5em, orange,<->] node {$(-)^!$} (a8)

(a2) edge[-] node[above]{{\small $\zeta\cdot$}}  (5.9,3)
(a8) edge[<-,to path=-| (\tikztotarget)] (5.9,3) 
(a2) edge[-, to path=-| (\tikztotarget)] (5.9,0)

(a1) edge[-] node[above]{{\small $\zeta\cdot$}}  (-6.9, 3)
(a1) edge[-,to path=-| (\tikztotarget)] (-6.9,0) 
(a5) edge[<-, to path=-| (\tikztotarget)] (-6.9,3)
;
\end{tikzpicture}
\caption{Type III} 
\label{type3}
\end{figure}

\begin{figure}[h]
\centering
\begin{tikzpicture}[commutative diagrams/every diagram]
\node (a1) at (-2.2,3)  {\small$(13)(26)(47)(58)$};
\node (a2) at (2.2,3)  {\small$(16)(27)(35)(48)$};
\node (a3) at (-2.2,1.5)  {\small$(14)(25)(37)(68)$};
\node (a4) at (2.2,1.5)  {\small$(15)(27)(38)(46)$};
\node (a5) at (-5,0)  {\small$(15)(28)(36)(47)$};
\node (a6) at (-5,-1.5) {\small$(17)(25)(36)(48)$};
\node (a7) at (4,0) {\small$(14)(26)(38)(57)$};
\node (a8) at (4,-1.5) {\small$(16)(24)(37)(58)$};


\draw (-5.5,0.3) edge[out=60, in=130, loop, orange, distance=2.5em,<->] node[above] {{\tiny$(-)^!$}} (-6,0.3);
\draw (5.1,0.3) edge[out=60, in=130, loop, orange, distance=2.5em,<->] node[above] {{\tiny$(-)^!$}} (4.6,0.3);

\draw (1.4-5,0.3-1.5) edge[out=65, in=5, loop, distance=2.5em, orange,<->] node[right] {\tiny{{$(-)^!$}}} (1.63-5,-0.1-1.5); 
\draw (-1.4+4,0.3-1.5) edge[out=115, in=-180, loop, distance=2.5em, orange,<->] node[left] {\tiny{{$(-)^!$}}} (-1.63+4,-0.1-1.5); 
      
\path[commutative diagrams/.cd, every arrow, every label]
(a1) edge[orange,<->] node {$(-)^!$} (a2)
(a1) edge[cyan,<->] node {$\gamma\cdot$} (a3)

(a2) edge[cyan,<->,bend left=60] node {$\gamma\cdot$} (a4)
(a2) edge node[swap] {$\zeta\cdot$} (a4)
(a3) edge[bend left=15] node {$\zeta\cdot$} (a6)
(a3) edge[orange,<->] node {$(-)^!$} (a4)
(a4) edge[bend left=5] node {$\zeta\cdot$} (a7)

(a5) edge[bend left=20] node {$\zeta\cdot$} (a1)
(a6) edge node {$\zeta\cdot$} (a5)
(a6) edge[cyan, bend left=60,<->] node {$\gamma\cdot$} (a5)
(a7) edge[bend left=10] node {$\zeta\cdot$} (a3)
(a7) edge[cyan,<->] node {$\gamma\cdot$} (a8)
(a1) edge[-] node[above]{{\small $\zeta\cdot$}}  (-6.9, 3)
(a1) edge[-,to path=-| (\tikztotarget)] (-6.9,-2.10766) 
(a8) edge[<-, to path=|- (\tikztotarget)] (-6.9,-2.1)

(a2) edge[<-] node[above]{{\small $\zeta\cdot$}}  (5.9,3)
(a2) edge[-,to path=-| (\tikztotarget)] (5.9,-1.5) 
(a8) edge[-, to path=-| (\tikztotarget)] (5.9,3)
;
\end{tikzpicture}
\caption{Type IV} 
\label{type4}
\end{figure}




\newpage

\end{document}